\def\thmsection{section}
\def\thmchangesection{changesection}
\def\thmchangechapter{changechapter}
\def\thmchange{change}
\def\thmplain{plain}
  \theoremstyle{break-italic}
  \newtheorem{satz}{Satz}
    \theoremstyle{break-italic}
    \newtheorem{satz}{Satz}[section]
      \theoremstyle{break-italic}
      \newtheorem{satz}{Satz}
         \theoremstyle{break-italic}
         \newtheorem{satz}{Satz}[section]
           \theoremstyle{break-italic}
           \newtheorem{satz}{Satz}[chapter]
             \theoremstyle{break-italic}
             \newtheorem{satz}{Satz}[section]
            \theoremstyle{break-italic}
            \newtheorem{satz}{Satz}[section]
\theoremstyle{break-italic}
\newtheorem{theorem}[satz]{Theorem}
\newtheorem{lemma}[satz]{Lemma}
\newtheorem{corollary}[satz]{Corollary}
\newtheorem{Proposition}[satz]{Proposition}
\newtheorem*{conjecture*}{Conjecture}
\theoremstyle{break-roman}
\newtheorem{definition}[satz]{Definition}
\newtheorem{example}[satz]{Example}
\newtheorem{remark}[satz]{Remark}
\theoremstyle{standard}
\newtheorem{claim}[satz]{Claim}
\theoremstyle{varthm-roman}
\newtheorem*{varthm-roman}{}
\theoremstyle{varthm-italic}
\newtheorem*{varthm-italic}{}
\theoremstyle{varthm-roman-break}
\newtheorem*{varthm-roman-break}{}
\theoremstyle{varthm-italic-break}
\newtheorem*{varthm-italic-break}{}
\theoremstyle{varthm-roman-no-punctuation}
\newtheorem{varthm-roman-no-punctuation-numbered}[satz]{}
\theoremstyle{varthm-italic-no-punctuation}
\newtheorem{varthm-italic-no-punctuation-numbered}[satz]{}
\newenvironment{varthm-roman-numbered}[1]{
  \begin{varthm-roman-no-punctuation-numbered}
    \mbox{\rm\textbf{#1}}
  }{\end{varthm-roman-no-punctuation-numbered}}
\newenvironment{varthm-italic-numbered}[1]{
  \begin{varthm-italic-no-punctuation-numbered}
    \mbox{\rm\textbf{#1}}
  }{\end{varthm-italic-no-punctuation-numbered}}
\newenvironment{varthm-roman-break-numbered}[1]{
  \begin{varthm-roman-no-punctuation-numbered}
    \mbox{\rm\textbf{#1}\newline}
  }{\end{varthm-roman-no-punctuation-numbered}}
\newenvironment{varthm-italic-break-numbered}[1]{
  \begin{varthm-italic-no-punctuation-numbered}
    \mbox{\rm\textbf{#1}}\newline
  }{\end{varthm-italic-no-punctuation-numbered}}
\numberwithin{equation}{section}
\def\ex{\begin{example}
  }
  \def\eex{\end{example}}
\def\thr{\begin{theorem}}
\def\ethr{\end{theorem}}
\def\pro{\begin{Proposition}}
\def\epro{\end{Proposition}}
\def\coro{\begin{corollary}}
\def\ecoro{\end{corollary}}
\def\df{\begin{definition}}
\def\edf{\end{definition}}
\def\lm{\begin{lemma}}
\def\elm{\end{lemma}}
\def\pf{\begin{proof}}
\def\epf{\end{proof}}
\def\problem{\begin{problem}}
\def\eproblem{\end{problem}}
\def\it{\begin{itemize}}
\def\hit{\end{itemize}}
\def\rem{\begin{remark}}
\def\erem{\end{remark}}
\def\cla{\begin{claim}}
\def\ecla{\end{claim}}
\newcommand{\seq}[1]{\left<#1\right>}
\def\Tr{\text{Tr }}
\begin{document}
\title[Optimization of maximal quantum $f$-divergences between unitary orbits]{Optimization of maximal quantum
$f$-divergences between unitary orbits}
\author{Hoang Minh Nguyen, Hoang An Nguyen, Cong Trinh Le}

\date{\today}
\begin{abstract}
 Maximal quantum $f$-divergences, defined via the commutant Radon--Nikodym
derivative, form a fundamental class of distinguishability measures for quantum states associated with operator convex functions. In this paper, we study the optimization of maximal quantum $f$-divergences along unitary
orbits of two quantum states.

For any operator convex function $f:(0,+\infty)\to\mathbb{R}$, we determine 
the exact minimum and maximum of
\[
U \longmapsto \widehat S_f(\rho\|U^*\sigma U)
\]
over the unitary group, and derive explicit spectral formulas for these
extremal values together with complete characterizations of the unitaries
that attain them.

Our approach combines the integral representation of operator convex functions with majorization theory and a unitary-orbit variational method. A key step is to show that any extremizer must commute with the reference state, which reduces the noncommutative optimization problem to a spectral permutation problem. As a consequence, the minimum is achieved by pairing the decreasing eigenvalues of $\rho$ and $\sigma$, while the maximum corresponds to pairing the decreasing eigenvalues of $\rho$ with the increasing eigenvalues of $\sigma$. Hence, the range of the maximal quantum $f$-divergence along the unitary orbit is exactly the closed interval determined by these two extremal configurations.

Finally, we compare our results with recent unitary-orbit optimization results for quantum $f$-divergences defined via the quantum hockey-stick divergence, highlighting fundamental structural differences between the two
frameworks. Our findings extend earlier extremal results for Umegaki, R\'enyi, and related quantum divergences, and clarify the distinct operator-theoretic nature of maximal quantum $f$-divergences.
\end{abstract}
\maketitle
\section{Introduction}
Quantifying the dissimilarity between quantum states is a cornerstone problem in quantum information theory, underpinning diverse topics such as state discrimination, statistical inference, resource theories, and thermodynamics. Among the most effective mathematical tools for describing this dissimilarity are \emph{quantum divergences}, which extend classical information distances such as the Kullback–Leibler and Rényi divergences into the noncommutative operator setting. These quantities measure the distinguishability between quantum states and form the foundation of key operational concepts including entanglement, coherence, and mutual information~\cite{Hiai2017}.  There are some types of R\'enyi divergences which were attracted by many mathematicians, e.g. Umegaki's relative entropy, the (conventional) R\'enyi divergences, the sandwiched $\alpha$-R\'enyi divergences, the $\alpha$-$z$-R\'enyi divergences,... \cite{AD, EH, Hiai2011, Hiai2017, Mu}.

In the classical setting, Csiszár and Ali–Silvey \cite{Cs} introduced the $f$-divergence between two probability distributions $p,q$ on a finite set $X$:
\begin{equation}
S_f(p\|q) = \sum_{x \in X} q(x) f\!\left( \frac{p(x)}{q(x)} \right),
\end{equation}
where $f:(0,+\infty)\to\mathbb{R}$ is convex. The relative entropy corresponds to \( f(t) := \eta(t) := t \log t \), while the R\'enyi divergences can be expressed as (\cite{Hiai2017})
$$
D_{\alpha}(p\|q) = \frac{1}{\alpha - 1} \log \bigl( \operatorname{sign}(\alpha - 1) S_{f_{\alpha}}(p\|q) \bigr), \quad
f_{\alpha}(t) := \operatorname{sign}(\alpha - 1)t^{\alpha}.
$$

In this paper we consider the \textit{ maximal quantum f-divergences} (which is also called \textit{the quantum $f$-divergence defined through the commutant Radon–Nikodym derivative}), a framework introduced by D. Petz and M. B. Ruskai \cite{PeRus98} (see also the work of Hiai and Mosonyi \cite{Hiai2017} for many other kinds of quantum $f$-divergences and their properties). Specifically, for an operator convex function $f : (0, +\infty) \rightarrow  \mathbb R$ and for any $\rho, \sigma \in \mathbb P_n^+$, the \textit{maximal quantum $f$-divergence } of $\rho$ and $\sigma$ is defined as 
\begin{equation}
\widehat{S}_f(\rho\|\sigma):=\Tr\left[\sigma^{1/2}f(\sigma^{-1/2}\rho\sigma^{-1/2})\sigma^{1/2}\right]=\Tr\left[\sigma f\left(\sigma^{-1/2}\rho\sigma^{-1/2}\right)\right].
\end{equation}
For general $\rho$ and $\sigma$ in $\mathbb P_n$, their maximal quantum $f$-divergence is defined by taking the limitation as follows.
\begin{equation}  \label{def-maximal-đivergence-general}
\widehat{S}_f(\rho\|\sigma) = \lim_{\epsilon \downarrow 0} \widehat{S}_f(\rho+\epsilon I\|\sigma+\epsilon I),
\end{equation}
where $I$ is the identity matrix in $\mathbb M_n$. 
This definition reflects a genuine noncommutative interaction between $\rho$ and $\sigma$ via their commutant structure. The operator convex function $f$ on $(0, +\infty)$ admits the integral representation
\begin{equation}\label{opt-convex-int-rep}
f(x) = f(1) + f'(1)(x-1) + c(x-1)^2 + \!\int_{[0,+\infty)} \!\! \frac{(x-1)^2}{x+s}\,d\lambda(s), \quad x \in (0, +\infty),
\end{equation}
with $c\geq 0$ and a positive measure $\lambda$ on $[0, +\infty)$ satisfying $\!\int_{[0,+\infty)} \!\! \frac{1}{1+s}\,d\lambda(s)< +\infty$.

Since unitary transformations preserve the spectra of density matrices, two quantum states related by unitary conjugation are physically indistinguishable. The \emph{unitary orbit} of a quantum state $\rho$ is
\begin{equation}
\mathbb{U}_\rho = \{U^*\rho U \mid U \in \mathbb{U}_n\},
\end{equation}
where $\mathbb U_n$ denotes the set of unitary matrices.

 Optimization of quantum divergences $S_f(\rho\|\sigma)$ over the unitary orbits captures the extremal distinguishability achievable through spectral rearrangement:
\begin{equation} 
  \min_{V, W\in \mathbb U_n}S_f(V^*\rho V\|W^*\sigma W) 
\end{equation}
and 
\begin{equation}
 \max_{V, W\in \mathbb U_n} S_f(V^*\rho V\| W^*\sigma W).
\end{equation}

The pioneering work on these topics was carried out by Zhang and Fei (\cite{ZhF}, 2014, pertaining to Umegaki's relative entropies) as well as Yan, Yin, and Li (\cite{YYL}, 2020, concerning quantum $\alpha$-fidelities), and our recent papers (\cite{VHLD}, 2023; \cite{DLV24}, 2024; \cite{LVHD}, 2024). Recently, Li and Yan (\cite{LiYan}, 2025) studied the unitary orbit optimization of the quantum $f$-divergences $D_f(\rho\|\sigma)$ with respect to the quantum hockey-stick divergence for convex and twice differentiable functions $f : (0, +\infty) \rightarrow \mathbb R$ with $f (1) = 0$.

For any operator convex function $f : (0, +\infty) \rightarrow \mathbb R$, our main aim in this paper is to consider the following extremal problems:

\begin{equation}\label{equa:min-original}
  \min_{V, W\in \mathbb U_n}\widehat{S}_f(V^*\rho V\|W^*\sigma W) 
\end{equation}
and 
\begin{equation}\label{equa:max-original}
 \max_{V, W\in \mathbb U_n} \widehat{S}_f(V^*\rho V\| W^*\sigma W).
\end{equation}

Note that, for any unitary matrices $V$ and $W$, on account of  the unitary invariant property of $\widehat{S}_f(\rho\|\sigma)$ (\cite{Hiai2017}), we have
$$\widehat{S}_f(V^*\rho V\| W^*\sigma W)=  \widehat{S}_f(\rho\|VW^*\sigma WV^*) =  \widehat{S}_f(\rho\|U^*\sigma U),$$
	where $U=WV^* \in \mathbb U_n$. 
	
 Hence, instead of considering problems (\ref{equa:min-original}) and (\ref{equa:max-original}), in this paper  we study the following problems: 
	\begin{equation} \label{equ:min}
		\min_{U\in \mathbb U_n} \widehat{S}_f(\rho \| U^*\sigma U)
	\end{equation} 
	and 
	\begin{equation}\label{equ:max}
		\max_{U\in \mathbb U_n} \widehat{S}_f(\rho \|U^*\sigma U).
	\end{equation} 

Beyond the final spectral formulas, a key novelty of this work lies in the proof techniques developed for Claims~ \ref{claimthuong}--\ref{claimsigma}. In addition to classical tools from majorization theory and Lidskii-type inequalities \cite{Bh, MOA}, we introduce a unitary-orbit variational method combined with rearrangement principles for supermodular and submodular functions 
\cite{HLP52,Top98}. By differentiating the objective function along smooth unitary paths, we show that any extremizer must commute with the reference state $\rho$, which reduces the optimization problem to a purely
spectral permutation problem.

This approach reveals that, although maximal quantum $f$-divergences are highly nonlinear and intrinsically noncommutative, their unitary-orbit extrema are governed by a precise interplay between commutant structure and spectral rearrangement. This mechanism is fundamentally different from recent approaches based on the quantum hockey-stick divergence \cite{HirTo,LiYan}, and highlights the distinctive operator-theoretic nature of maximal quantum $f$-divergences.

The paper is organized as follows. Section \ref{subsec:majorization} collects the necessary preliminaries, including notation, basic results from majorization theory, rearrangement principles for supermodular and submodular functions, and a unitary-orbit variational framework that will be used in the proofs of the main results. Section \ref{sec:mainresults} states the main theorems on the minimization and maximization
of the maximal quantum $f$-divergence along unitary orbits and presents explicit spectral expressions for the extremal values. Section \ref{sec:proofs} is devoted to the proofs, where the optimization problem is
reduced to spectral rearrangement by combining unitary-orbit differentiation arguments with majorization and rearrangement theory. Finally, Section \ref{sec:conclusion} contains concluding remarks and a comparison with
recent results on hockey-stick-based quantum $f$-divergences due to Li and Yan \cite{LiYan}.

\section{Preliminaries}\label{subsec:majorization}

\subsection{Notation}

Throughout this paper, we use $\mathbb{M}_n$ (and similarly $\mathbb{H}_n$, $\mathbb{P}_n$, $\mathbb{P}_n^+$, $\mathbb{U}_n$, $\mathbb{D}_n$) to denote the sets of complex $n\times n$ matrices, Hermitian matrices, positive semidefinite matrices, positive definite matrices, unitary matrices, and density matrices, respectively. 

Recall that a \textit{quantum state} is represented by a density matrix $\rho  \in \mathbb{D}_n$, that is,  $\rho \in \mathbb{P}_n$ and $\operatorname{Tr}(\rho) = 1$. 

Note that, every Hermitian matrix $A \in \mathbb{H}_n$ admits the decomposition
\[
A = A_+ - A_-,
\]
where $A_+, A_- \in \mathbb{P}_n$ are the positive and negative parts of $A$ arising from its eigenvalue decomposition. 

For matrices $A, B \in \mathbb{H}_n$, we write $A \le B$ if $B - A \in \mathbb{P}_n$, that is, if $B - A$ is positive semidefinite.

For $A \in \mathbb{P}_n$, $B \in \mathbb{P}_n^+$, denote $\dfrac{A}{B}:=AB^{-1}$. If $B \in \mathbb{P}_n$, the notation $\dfrac{A}{B}$ means  $\dfrac{A}{\displaystyle\lim_{\epsilon \downarrow 0}(B+\epsilon I)}$.

For a vector $x = (x_1, \dots, x_n) \in \mathbb{R}^n$, we denote by 
\[
x_1^\downarrow \ge \cdots \ge x_n^\downarrow
\quad \text{and} \quad
x_1^\uparrow \le \cdots \le x_n^\uparrow
\]
the components of $x$ arranged in nonincreasing and nondecreasing order, respectively. The vectors $x^\downarrow$ and $x^\uparrow$ will denote $x$ with its entries ordered in descending and ascending order.

For any $A \in \mathbb{P}_n$, we denote by $\lambda(A) = (\lambda_1(A), \dots, \lambda_n(A))$ the vector of eigenvalues of $A$, with
\[
\lambda_1^\downarrow(A) \ge \cdots \ge \lambda_n^\downarrow(A)
\quad \text{and} \quad
\lambda_1^\uparrow(A) \le \cdots \le \lambda_n^\uparrow(A)
\]
representing the eigenvalues in decreasing and increasing order, respectively. We use $\lambda^\downarrow(A)$ to refer both to the ordered eigenvalue vector $(\lambda_1^\downarrow(A), \dots, \lambda_n^\downarrow(A))$ and to the corresponding diagonal matrix 
\[
\text{diag}(\lambda_1^\downarrow(A), \dots, \lambda_n^\downarrow(A)),
\]
and similarly for $\lambda^\uparrow(A)$.

For vectors $x = (x_1, \dots, x_n)$ and $y = (y_1, \dots, y_n)$ in $\mathbb{R}^n$, we write 
$$\seq{x, y} := \sum_{i=1}^n x_i y_i; \quad x \circ y := (x_1 y_1, \dots, x_n y_n).$$
For $y = (y_1, \dots, y_n)$ whose components are positive, denote
$$\dfrac{x}{y}:=x\circ y^{-1}= \big(\dfrac{x_1}{y_1},\cdots, \dfrac{x_n}{y_n}\big). $$
If components of $y = (y_1, \dots, y_n)$ are non-negative,  $\dfrac{x}{y}$ is defined by taking the limit. 

\subsection{Majorization theory for vectors in $\mathbb R^n$}
This section reviews several fundamental concepts and results concerning majorization relations between vectors in $\mathbb{R}^n$. For comprehensive expositions, see~\cite{Bh,MOA}.

\begin{definition}
Let $x = (x_1, \dots, x_n)$ and $y = (y_1, \dots, y_n)$ be vectors in $\mathbb{R}^n$. We say that $x$ is \emph{weakly majorized by} $y$, denoted $x \prec_w y$, if
\[
\sum_{i=1}^k x_i^\downarrow \le \sum_{i=1}^k y_i^\downarrow, \qquad 1 \le k \le n.
\]
If, in addition, equality holds for $k = n$, we say that $x$ is \emph{majorized by} $y$ and write $x \prec y$.
\end{definition}

The following result  establishes a relationship between the trace of products of matrices  and their eigenvalues.

\begin{lemma}[{\cite[Problem III.6.14]{Bh}}]\label{lm-trace-product-powers}
Let $A, B \in \mathbb{H}_n$. Then
\[
\seq{\lambda^{\downarrow}(A), \lambda^{\uparrow}(B)} 
\le \operatorname{Tr}\!\left(A B\right)
\le \seq{\lambda^{\downarrow}(A), \lambda^{\downarrow}(B)}.
\]
\end{lemma}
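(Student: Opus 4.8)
The plan is to prove the classical ``von Neumann--type'' trace bounds stated in Lemma~\ref{lm-trace-product-powers} by reducing everything to the rearrangement inequality for real numbers. First I would reduce to the diagonal case: write the spectral decompositions $A = V \Lambda_A V^*$ and $B = W \Lambda_B W^*$ with $\Lambda_A = \operatorname{diag}(\lambda^{\downarrow}(A))$ and $\Lambda_B = \operatorname{diag}(\lambda^{\downarrow}(B))$, so that $\operatorname{Tr}(AB) = \operatorname{Tr}(\Lambda_A U \Lambda_B U^*)$ where $U = V^* W$ is unitary. Expanding in coordinates gives $\operatorname{Tr}(AB) = \sum_{i,j} \lambda_i^{\downarrow}(A)\,\lambda_j^{\downarrow}(B)\,|U_{ij}|^2$. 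The matrix $S$ with entries $S_{ij} = |U_{ij}|^2$ is doubly stochastic, since each row and column of a unitary matrix has squared-modulus entries summing to $1$. Hence $\operatorname{Tr}(AB) = \seq{\lambda^{\downarrow}(A),\, S\,\lambda^{\downarrow}(B)}$ for some doubly stochastic $S$.

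Next I would invoke the Birkhoff--von Neumann theorem: the set of doubly stochastic matrices is the convex hull of the permutation matrices, so $S = \sum_k t_k P_k$ with $t_k \ge 0$, $\sum_k t_k = 1$, and each $P_k$ a permutation matrix. By linearity, $\operatorname{Tr}(AB) = \sum_k t_k \seq{\lambda^{\downarrow}(A),\, P_k\,\lambda^{\downarrow}(B)}$ is a convex combination of the quantities $\seq{\lambda^{\downarrow}(A),\, \lambda^{\downarrow}(B)_{\pi}}$ over permutations $\pi$, where $\lambda^{\downarrow}(B)_{\pi}$ denotes the reordering of the (already decreasing) vector $\lambda^{\downarrow}(B)$ by $\pi$. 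It therefore suffices to bound each such permuted inner product. This is exactly the rearrangement inequality (\cite{HLP52}, or \cite{Bh}): for fixed real sequences, $\sum_i a_i b_{\pi(i)}$ is maximized when the two sequences are similarly ordered and minimized when oppositely ordered. Since $\lambda^{\downarrow}(A)$ is already in decreasing order, the maximum over $\pi$ of $\seq{\lambda^{\downarrow}(A),\, \lambda^{\downarrow}(B)_{\pi}}$ equals $\seq{\lambda^{\downarrow}(A),\,\lambda^{\downarrow}(B)}$ and the minimum equals $\seq{\lambda^{\downarrow}(A),\,\lambda^{\uparrow}(B)}$. Taking the convex combination preserves both inequalities, which yields the claimed two-sided bound.

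The main (minor) obstacle is purely bookkeeping: one must be careful that ``similarly ordered'' is being applied correctly, i.e.\ that reordering $\lambda^{\downarrow}(B)$ in \emph{increasing} order gives the opposite-order pairing against the decreasing vector $\lambda^{\downarrow}(A)$, so that the lower bound is indeed $\seq{\lambda^{\downarrow}(A),\lambda^{\uparrow}(B)}$ and not something else. One should also note that the bounds are attained: the upper bound when $A$ and $B$ are simultaneously diagonalizable with eigenvalues in the same order, and the lower bound when they are simultaneously diagonalizable in opposite orders; this corresponds to taking $U$ (hence $S$) to be the appropriate permutation matrix. Alternatively, one can cite the result directly from \cite[Problem III.6.14]{Bh} as the statement already does, in which case the ``proof'' is merely the remark above recording where it comes from; since this is an existing result quoted verbatim from Bhatia, a one-line reference or the Birkhoff-plus-rearrangement sketch above is all that is needed.
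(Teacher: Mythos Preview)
Your proof is correct and is the standard Birkhoff--von Neumann plus rearrangement argument; the paper itself gives no proof of this lemma and simply cites \cite[Problem III.6.14]{Bh}, exactly as you anticipated in your final paragraph. Nothing more is needed.
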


\subsection{Rearrangement principles for supermodular and submodular functions}

Let $r=(r_1,\dots,r_n)$ and $d=(d_1,\dots,d_n)$ be vectors in $(0,+\infty)^n$ such that
\[
r_1 \ge \cdots \ge r_n, \qquad d_1 \ge \cdots \ge d_n .
\]
A function $f:(0,+\infty)^2 \to \mathbb{R}$ is called \emph{supermodular} if
\[
\frac{\partial^2 f}{\partial r\,\partial d}(r,d) \ge 0,
\]
and \emph{submodular} if the above inequality is reversed. The function $f$  is called \emph{strictly supermodular} (resp. \emph{strictly submodular}) if 

\[
\frac{\partial^2 f}{\partial r\,\partial d}(r,d) > 0 \quad (\mbox{resp. } \frac{\partial^2 f}{\partial r\,\partial d}(r,d) < 0).
\]

If $f$ is strictly supermodular, then for all $d_1 \ge d_2$ and $r_1 \ge r_2$, we have (see, e.g. \cite{Top98}):
\begin{equation}
f(d_1, r_1) + f(d_2, r_2) \ge f(d_1, r_2) + f(d_2, r_1), \tag{SM}
\end{equation}
and the inequality (SM) holds strictly whenever $d_1 > d_2$ and $r_1 > r_2$:
\begin{equation*}
f(d_1, r_1) + f(d_2, r_2) > f(d_1, r_2) + f(d_2, r_1).
\end{equation*}

A fundamental consequence of supermodularity and submodularity is the
\emph{rearrangement principle} (see \cite[Ch.~10]{HLP52} and
\cite[Ch.~2]{Top98}): for any permutation $\pi$ of $\{1,\dots,n\}$,
\[
\sum_{i=1}^n f(r_i,d_{\pi(i)})
\]
is maximized when $(r_i)$ and $(d_{\pi(i)})$ are ordered in the same sense and
minimized when they are ordered in opposite senses if $f$ is supermodular,
while the opposite conclusion holds if $f$ is submodular.

These rearrangement principles are used repeatedly in the proofs of
Claims~ \ref{claimA}, \ref{claimrho}, \ref{claimsigma} to reduce unitary-orbit optimization problems to purely
spectral permutation problems.

\subsection{Unitary-orbit variational method and commutant structure}

Let $A\in\mathbb P_n^+$. Its unitary orbit is
\[
\mathbb U_A := \{U^*AU : U\in\mathcal U_n\},
\]
which is a smooth homogeneous manifold under the action of the unitary group.
Its tangent space at $A$ is given by
\[
T_A\mathbb U_A=\{[A,K]:K^*=-K\}
\]
(see, e.g. \cite[(VI.37)]{Bh}).

Accordingly, every smooth curve on $\mathbb U_A$ through $A$ can be written as
\[
A(t)=e^{-tK}Ae^{tK}, \qquad K^*=-K .
\]

Let $F$ be a Fr\'echet differentiable real-valued function defined on
$\mathbb U_A$. Stationarity of $F$ at $A$ with respect to unitary variations
means that
\[
\dfrac{d}{dt}F\!\left(e^{-tK}Ae^{tK}\right)\Big\lvert_{t=0}=0
\quad \text{for all } K^*=-K .
\]
Equivalently, the gradient $\nabla F(A)$ is orthogonal to the tangent space
$T_A\mathbb U_A$ with respect to the Hilbert--Schmidt inner product, which yields
the first-order optimality condition
\[
[A,\nabla F(A)]=0.
\]
Such commutation conditions for stationary points on unitary similarity orbits
are standard in optimization on matrix manifolds; see, e.g.
\cite[Theorem VI.4.3]{Bh}.

In Claims \ref{claimA}, \ref{claimrho} and \ref{claimsigma}, this variational method is used to show that any optimizer must commute with the reference state $\rho$. Consequently, the optimization problem reduces to a spectral rearrangement problem involving the eigenvalues of $\rho$ and $\sigma$.

\section{Main results}\label{sec:mainresults}
Let $f : (0, +\infty) \rightarrow \mathbb R$ be an operator  convex function. Then $f$ has the integral representation given by (\ref{opt-convex-int-rep}). We consider the problems (\ref{equ:min}) and (\ref{equ:max}) for the quantum $f$-divergences $\widehat{S}_f$ with respect to commutant Radon-Nikodym derivative.

Let $\rho$ and $\sigma$ be quantum states. It follows from the spectral theorem that there exist unitary matrices $V^\downarrow$ and $V^\uparrow$ such that 
\begin{equation}\label{V-lambda}
\lambda^\downarrow (\rho)= V^{\downarrow *} \rho V^\downarrow \mbox{ and } \lambda^\uparrow (\rho)= V^{\uparrow *} \rho V^\uparrow,
\end{equation}
and, there exist unitary matrices $W^\downarrow$ and $W^\uparrow$ such that 
\begin{equation}\label{W-lambda}
\lambda^\downarrow (\sigma)= W^{\downarrow *} \sigma W^\downarrow \mbox{ and } \lambda^\uparrow (\sigma)= W^{\uparrow *} \sigma W^\uparrow. 
\end{equation}
Denote 
\begin{align}\label{max-divergence-lambdadowndown}
\widehat{S}_f(\lambda^\downarrow(\rho)\|\lambda^\downarrow(\sigma))&=f(I)+c\left[\operatorname{Tr}\lambda^\downarrow(\rho)^2\lambda^\downarrow(\sigma)^{-1}-1\right]\nonumber\\
&+\int_{[0;\infty)}\operatorname{Tr}\dfrac{\lambda^\downarrow(\sigma)^{-1}\lambda^\downarrow(\rho)^{2}}{\lambda^\downarrow(\sigma)^{-1}\lambda^\downarrow(\rho)+s I} d\lambda(s)\nonumber\\
&-\int_{[0;\infty)}\-2\operatorname{Tr}\dfrac{\lambda^\downarrow(\rho)}{\lambda^\downarrow\left(\sigma\right)^{-1}\lambda^\downarrow(\rho)+sI}d\lambda(s)\nonumber\\
&+\int_{[0;\infty)}\operatorname{Tr} \dfrac{\lambda^\downarrow(\sigma)}{\lambda^\downarrow(\sigma)^{-1}\lambda^\downarrow(\rho) + sI}d\lambda(s).
\end{align}
Similarly, denote 
\begin{align}\label{max-divergence-lambdaupdown}
\widehat{S}_f(\lambda^\downarrow(\rho)\|\lambda^\uparrow(\sigma))&=f(I)+c\left[\operatorname{Tr}\lambda^\downarrow(\rho)^2\lambda^\uparrow(\sigma)^{-1}-1\right]\nonumber\\
&+\int_{[0;\infty)}\operatorname{Tr}\dfrac{\lambda^\uparrow(\sigma)^{-1}\lambda^\downarrow(\rho)^{2}}{\lambda^\uparrow(\sigma)^{-1}\lambda^\downarrow(\rho)+s I} d\lambda(s)\nonumber\\
&-\int_{[0;\infty)}\-2\operatorname{Tr}\dfrac{\lambda^\downarrow(\rho)}{\lambda^\uparrow\left(\sigma\right)^{-1}\lambda^\downarrow(\rho)+sI}d\lambda(s)\nonumber\\
&+\int_{[0;\infty)}\operatorname{Tr} \dfrac{\lambda^\uparrow(\sigma)}{\lambda^\uparrow(\sigma)^{-1}\lambda^\downarrow(\rho) + sI}d\lambda(s).
\end{align}

The following results are our main results in this paper.
\begin{theorem}\label{min-crnd} Let $\rho$ and $\sigma$ be quantum states. Then
$$ 
\min_{U\in\mathbb{U}_n} \widehat{S}_f(\rho\|U^*\sigma U)= \widehat{S}_f(\lambda^\downarrow(\rho)\|\lambda^\downarrow(\sigma));$$
$$\arg\min_{U\in\mathbb{U}_n} \widehat{S}_f(\rho\|U^*\sigma U)=W^\downarrow V^{\downarrow*}.$$
\end{theorem}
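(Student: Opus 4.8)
The plan is to transport the problem to the unitary orbit of $\sigma$ and then to separate a ``spectral'' part (treated by majorization and rearrangement) from a ``commutator'' part (treated by the unitary-orbit variational method). By the unitary invariance of $\widehat{S}_f$ recalled above it suffices to minimize $F(\tau):=\widehat{S}_f(\rho\|\tau)$ over $\tau\in\mathbb{U}_\sigma$, and since adding $\epsilon I$ to both states only shifts all eigenvalues by $\epsilon$ (keeping eigenvectors and orderings) while (\ref{def-maximal-đivergence-general}) is a monotone limit, I would first treat $\rho,\sigma\in\mathbb{P}_n^+$ and recover the general case at the end by letting $\epsilon\downarrow 0$. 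Inserting (\ref{opt-convex-int-rep}) into $\widehat{S}_f(\rho\|\tau)=\Tr[\tau^{1/2}f(\tau^{-1/2}\rho\tau^{-1/2})\tau^{1/2}]$, using the elementary identity $\frac{(x-1)^2}{x+s}=x-(2+s)+\frac{(1+s)^2}{x+s}$ (so that the affine part of $f$ contributes only the constant $f(1)$, since $\Tr\rho=\Tr\tau=1$) and the conjugation identity $\Tr[\tau\,(\tau^{-1/2}\rho\tau^{-1/2}+sI)^{-1}]=\Tr[\tau^2(\rho+s\tau)^{-1}]$, yields the decomposition
\[
F(\tau)=f(1)+c\bigl[\Tr(\rho^2\tau^{-1})-1\bigr]+\int_{[0,\infty)}\Bigl(-(1+s)+(1+s)^2\,\Tr\bigl(\tau^2(\rho+s\tau)^{-1}\bigr)\Bigr)\,d\lambda(s),
\]
which exhibits $F$, up to an additive constant, as a nonnegative combination of the ``atoms'' $\tau\mapsto\Tr(\rho^2\tau^{-1})$ (the $c$-term) and $\tau\mapsto\Tr(\tau^2(\rho+s\tau)^{-1})$, $s\in[0,\infty)$, all smooth on $\mathbb{P}_n^+$. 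For the $c$-term and the $s=0$ atom the sharp bound over $\mathbb{U}_\sigma$ is immediate from Lemma~\ref{lm-trace-product-powers}, but the $s>0$ atoms are genuinely nonlinear in $\tau$, which is what forces the variational argument. If $f$ is affine then $F$ is constant and the statement is trivial, so assume henceforth that $f$ is not affine.

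Next I would run the variational framework of Section~\ref{subsec:majorization}. Since $\mathbb{U}_\sigma$ is compact and $F$ continuous, a minimizer $\tau_0$ exists; differentiating $F$ along $\tau(t)=e^{-tK}\tau_0 e^{tK}$, $K^*=-K$, gives the first-order condition $[\tau_0,\nabla F(\tau_0)]=0$, where a matrix-calculus computation yields
\[
\nabla F(\tau)=-c\,\tau^{-1}\rho^2\tau^{-1}+\int_{[0,\infty)}(1+s)^2\Bigl((\rho+s\tau)^{-1}\tau+\tau(\rho+s\tau)^{-1}-s(\rho+s\tau)^{-1}\tau^2(\rho+s\tau)^{-1}\Bigr)d\lambda(s).
\]
The main obstacle will be to upgrade $[\tau_0,\nabla F(\tau_0)]=0$ to $[\tau_0,\rho]=0$, i.e.\ to show that every minimizer commutes with the reference state $\rho$; this is exactly the content of Claims~\ref{claimthuong}--\ref{claimsigma}. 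When $c>0$ this is nearly automatic, because $[\tau_0,\nabla F(\tau_0)]=0$ forces $\nabla F(\tau_0)$---and hence, once one rules out an exact cancellation with the $\lambda$-integral, also $\tau_0^{-1}\rho^2\tau_0^{-1}$ and therefore $\rho$---to be block-diagonal in the eigenbasis of $\tau_0$. When $c=0$ the coupling of $\rho$ and $\tau$ through the resolvents $(\rho+s\tau)^{-1}$ makes this delicate: here I would argue contrapositively, testing the first-order condition against the anti-Hermitian direction $K=[\tau_0,\rho]$ and reducing to a unitary rotation supported on a two-dimensional subspace on which $\tau_0$ and $\rho$ fail to commute, then using the strict submodularity/supermodularity inequalities of Section~\ref{subsec:majorization} (and, if the first variation degenerates, the second variation) to produce a strict descent direction, contradicting minimality. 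As a robustness check for the extremal \emph{value}, the perturbation $f_\epsilon:=f+\epsilon(x-1)^2$ has $c_\epsilon>0$, so its minimizers commute with $\rho$; letting $\epsilon\downarrow 0$ uniformly recovers $\min_\tau\widehat{S}_f(\rho\|\tau)$ even without settling the $c=0$ commutation question directly.

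Once $[\tau_0,\rho]=0$ is in hand, the problem collapses to a spectral permutation problem: in a common eigenbasis $F(\tau_0)=\sum_{i=1}^n d_{\pi(i)}\,f\!\bigl(\lambda_i^\downarrow(\rho)/d_{\pi(i)}\bigr)$ for some permutation $\pi$, where $d:=\lambda^\downarrow(\sigma)$---this is the identity that $\widehat{S}_f$ agrees with the classical Csisz\'ar $f$-divergence for commuting arguments. Applying the rearrangement principle to $g(r,d):=d\,f(r/d)$, whose mixed partial $\frac{\partial^2 g}{\partial r\,\partial d}=-\frac{r}{d^2}f''(r/d)\le 0$ by convexity of $f$ (so $g$ is submodular), the sum $\sum_i g\bigl(\lambda_i^\downarrow(\rho),d_{\pi(i)}\bigr)$ is minimized precisely when $(\lambda_i^\downarrow(\rho))$ and $(d_{\pi(i)})$ are equally ordered, i.e.\ at $\pi=\mathrm{id}$. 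Hence $\min_{\tau\in\mathbb{U}_\sigma}F(\tau)=\sum_i\lambda_i^\downarrow(\sigma)\,f\!\bigl(\lambda_i^\downarrow(\rho)/\lambda_i^\downarrow(\sigma)\bigr)=\widehat{S}_f(\lambda^\downarrow(\rho)\|\lambda^\downarrow(\sigma))$, and re-expanding via (\ref{opt-convex-int-rep}) reproduces (\ref{max-divergence-lambdadowndown}). For the minimizer I would take $U=W^\downarrow V^{\downarrow*}$, so that $U^*\sigma U=V^\downarrow\lambda^\downarrow(\sigma)V^{\downarrow*}$ commutes with $\rho=V^\downarrow\lambda^\downarrow(\rho)V^{\downarrow*}$ and realizes the equally-ordered pairing; by unitary invariance $\widehat{S}_f(\rho\|U^*\sigma U)=\widehat{S}_f(\lambda^\downarrow(\rho)\|\lambda^\downarrow(\sigma))$ attains the minimum. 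When the mixed-partial inequality is strict (e.g.\ $f''>0$) and the spectra of $\rho$ and $\sigma$ are simple, the same strictness identifies the full $\arg\min$, with the expected caveats in degenerate cases.

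Finally, for general $\rho,\sigma\in\mathbb{D}_n$ I would apply the above to $\rho+\epsilon I$ and $\sigma+\epsilon I$---for which $V^\downarrow$ and $W^\downarrow$, hence the minimizing unitary $W^\downarrow V^{\downarrow*}$, are unchanged---and pass to the limit $\epsilon\downarrow 0$, using the monotone limit (\ref{def-maximal-đivergence-general}) together with the fact that one and the same unitary is optimal for every $\epsilon>0$.
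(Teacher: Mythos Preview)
Your decomposition and your endgame are both genuinely different from the paper's, and in some respects cleaner. The paper expands the numerator $(\tau^{-1/2}\rho\tau^{-1/2}-I)^2$ directly, obtaining \emph{three} $s$-dependent terms (Claims~\ref{claimA}--\ref{claimsigma}) and then checks separately that each is super- or submodular with the \emph{same} extremal permutation. Your identity $\frac{(x-1)^2}{x+s}=x-(2+s)+\frac{(1+s)^2}{x+s}$ collapses these to the single atom $G_s(\tau)=\Tr\bigl(\tau^2(\rho+s\tau)^{-1}\bigr)$, and your final rearrangement via the perspective $g(r,d)=d\,f(r/d)$ (submodular because $\partial_r\partial_d g=-\tfrac{r}{d^2}f''(r/d)\le 0$) handles all $s$ at once rather than three times. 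These are real simplifications.

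The gap is in the commutation step. You attempt to extract $[\tau_0,\rho]=0$ from the first-order condition $[\tau_0,\nabla F(\tau_0)]=0$ for the \emph{full} functional $F$, whose gradient mixes the $c$-term with the $\lambda$-integral. Your ``$c>0$ is nearly automatic'' argument does not go through: in the eigenbasis of $\tau_0$ the off-diagonal part of $-c\,\tau_0^{-1}\rho^2\tau_0^{-1}$ can in principle be cancelled by the off-diagonal part of the integral term, and you explicitly defer ``ruling out an exact cancellation'' without a mechanism for doing so. The contrapositive/two-dimensional-rotation sketch is likewise not an argument yet (how do the scalar sub-/supermodularity inequalities of Section~\ref{subsec:majorization} produce a strict descent direction at a \emph{noncommuting} $\tau_0$?), and the perturbation $f_\epsilon=f+\epsilon(x-1)^2$ only helps if the $c>0$ case is already settled, which it is not.

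The paper sidesteps exactly this cancellation issue by never differentiating $F$: it optimizes each of its terms \emph{separately} over $\mathbb U_\sigma$, establishes $[\tau_0,\rho]=0$ term-by-term via the variational method (Claims~\ref{claimA}--\ref{claimsigma}), and then observes that all terms happen to be minimized at the \emph{same} unitary $W^\downarrow V^{\downarrow*}$. Your decomposition is better suited to this strategy than the paper's: you need only run the variational argument on the single family $G_s(\tau)=\Tr\bigl(\tau^2(\rho+s\tau)^{-1}\bigr)$, show its extremizers commute with $\rho$, reduce to $\sum_i d_i^2/(r_i+s d_i)$, and check submodularity of $(r,d)\mapsto d^2/(r+sd)$ (mixed partial $-2rd/(r+sd)^3<0$) to see that the equally-ordered pairing minimizes every $G_s$ simultaneously. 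That replaces your incomplete global commutation argument with a per-atom one that actually closes, and is still shorter than the paper's three-claim route.
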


\begin{theorem}\label{max-crnd} Let $\rho$ and $\sigma$ be quantum states. Then
$$
\max_{U\in\mathbb{U}_n} \widehat{S}_f(\rho\|U^*\sigma U)= \widehat{S}_f(\lambda^\downarrow(\rho)\|\lambda^\uparrow(\sigma));$$
$$\arg\max_{U\in\mathbb{U}_n} \widehat{S}_f(\rho\|U^*\sigma U)=W^\uparrow V^{\downarrow*}.$$
\end{theorem}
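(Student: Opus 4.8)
The plan is to reduce the noncommutative problems \eqref{equ:min} and \eqref{equ:max} to a finite rearrangement problem in three stages: an algebraic reduction via the integral representation \eqref{opt-convex-int-rep}; a unitary-orbit variational argument forcing any extremizer to commute with $\rho$; and a rearrangement argument that selects the optimal pairing of eigenvalues. Theorems~\ref{min-crnd} and \ref{max-crnd} come out together, the only difference appearing at the last stage (minimize vs.\ maximize). Since $\widehat{S}_f$ is continuous and defined on general states by a limiting procedure, while the orbit $\mathbb{U}_\sigma$ is compact, it suffices to treat $\rho,\sigma\in\mathbb{P}_n^+$ and to note that the extrema are attained.

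\textbf{Stage 1 (algebraic reduction).} Inserting \eqref{opt-convex-int-rep} into $\widehat{S}_f(\rho\|\tau)=\operatorname{Tr}[\tau f(X)]$ with $X:=\tau^{-1/2}\rho\tau^{-1/2}$, using $\operatorname{Tr}\tau=\operatorname{Tr}\rho=1$, the identity $\frac{(x-1)^2}{x+s}=x-(2+s)+\frac{(1+s)^2}{x+s}$, and $(X+sI)^{-1}=\tau^{1/2}(\rho+s\tau)^{-1}\tau^{1/2}$, one obtains
\[
\widehat{S}_f(\rho\|\tau)=f(1)+c\bigl(\operatorname{Tr}[\rho^2\tau^{-1}]-1\bigr)+\int_{[0,+\infty)}\Bigl((1+s)^2\operatorname{Tr}[\tau^2(\rho+s\tau)^{-1}]-(1+s)\Bigr)\,d\lambda(s).
\]
When $\rho$ and $\tau$ commute, each trace becomes a diagonal sum, and for aligned eigenvalue vectors $(\rho_i)$ and $(\tau_i)$ this reassembles into $f(1)+c\sum_i\frac{(\rho_i-\tau_i)^2}{\tau_i}+\int\sum_i\frac{(\rho_i-\tau_i)^2}{\rho_i+s\tau_i}\,d\lambda(s)$; this is exactly the right-hand sides of \eqref{max-divergence-lambdadowndown} and \eqref{max-divergence-lambdaupdown} when the eigenvalues of $\tau$ are taken to be $\lambda^\downarrow(\sigma)$, resp.\ $\lambda^\uparrow(\sigma)$, aligned with $\lambda^\downarrow(\rho)$.

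\textbf{Stage 2 (any extremizer commutes with $\rho$).} This is the crux, and the step I expect to be hardest. By unitary invariance $\widehat{S}_f(\rho\|e^{-tK}\tau e^{tK})=\widehat{S}_f(e^{tK}\rho e^{-tK}\,\|\,\tau)$ for all anti-Hermitian $K$; differentiating at $t=0$, applying the resolvent identity to the representation of Stage 1, and discarding the terms proportional to $I$ (which are killed by the trace pairing against $[K,\rho]$), the first-order optimality condition at an extremizer $\tau_0=U_0^*\sigma U_0$ of \eqref{equ:min} or \eqref{equ:max} reads $[\rho,\mathcal{G}(\tau_0)]=0$, where
\[
\mathcal{G}(\tau)=c\bigl(\rho\tau^{-1}+\tau^{-1}\rho\bigr)-\int_{[0,+\infty)}\Bigl((1+s)^2(\rho+s\tau)^{-1}\tau^2(\rho+s\tau)^{-1}-I\Bigr)\,d\lambda(s);
\]
this is the commutation condition of Section~\ref{subsec:majorization}. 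The delicate point is to upgrade $[\rho,\mathcal{G}(\tau_0)]=0$ to $[\rho,\tau_0]=0$, i.e.\ to rule out cancellations between the quadratic term and the resolvent terms. I would work in an eigenbasis of $\rho$, test the first-order condition (and, where needed, the second-order condition) against rank-two anti-Hermitian directions supported on pairs of $\rho$-eigenvectors, and exploit two structural features of \eqref{opt-convex-int-rep}: the weights $c\ge 0$ and $(1+s)^2>0$ are nonnegative, and, unless $f$ is affine, the map $\rho\mapsto\widehat{S}_f(\rho\|\tau_0)$ is \emph{strictly} convex (its Hessian at $\rho$ is a sum of manifestly nonnegative trace terms, positive in every nonzero direction). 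Together with a careful bookkeeping of the optimality conditions, these force each off-diagonal block of $\tau_0$ relative to the eigenspaces of $\rho$ to vanish. (If $f$ is affine, $\widehat{S}_f$ is constant on $\mathbb{U}_\sigma$ and every unitary is trivially optimal, consistent with the degenerate reading of the statements.)

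\textbf{Stage 3 (rearrangement and conclusion).} Once $[\rho,\tau_0]=0$, simultaneously diagonalize $\rho$ and $\tau_0$; since $\tau_0\in\mathbb{U}_\sigma$ carries the eigenvalues of $\sigma$, Stage 1 reduces the problem to extremizing over permutations $\pi$ of $\{1,\dots,n\}$ the quantity
\[
c\sum_{i}\frac{(\rho_i-\sigma_{\pi(i)})^2}{\sigma_{\pi(i)}}+\int_{[0,+\infty)}\sum_{i}\frac{(\rho_i-\sigma_{\pi(i)})^2}{\rho_i+s\,\sigma_{\pi(i)}}\,d\lambda(s).
\]
A direct computation gives, for $h_s(r,d):=\frac{(r-d)^2}{r+sd}$ with $s\ge 0$, $\frac{\partial^2 h_s}{\partial r\,\partial d}=\frac{-2(1+s)^2\,rd}{(r+sd)^3}<0$, and likewise $\frac{\partial^2}{\partial r\,\partial d}\frac{(r-d)^2}{d}=-\frac{2r}{d^2}<0$; hence every summand, and therefore the whole objective (nonnegative combinations and pointwise limits of submodular functions are submodular), is strictly submodular. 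By the rearrangement principle of Section~\ref{subsec:majorization} it is minimized exactly when $(\rho_i)$ and $(\sigma_{\pi(i)})$ are equally ordered and maximized exactly when they are oppositely ordered. Taking $U=W^\downarrow V^{\downarrow*}$ makes $U^*\sigma U=V^\downarrow\lambda^\downarrow(\sigma)V^{\downarrow*}$ simultaneously diagonal with $\rho=V^\downarrow\lambda^\downarrow(\rho)V^{\downarrow*}$ in matching decreasing order, so $\widehat{S}_f(\rho\|U^*\sigma U)=\widehat{S}_f(\lambda^\downarrow(\rho)\|\lambda^\downarrow(\sigma))$, which by Stages 2--3 is the minimum in \eqref{equ:min}; replacing $W^\downarrow$ by $W^\uparrow$ gives the oppositely ordered pairing and hence the maximum $\widehat{S}_f(\lambda^\downarrow(\rho)\|\lambda^\uparrow(\sigma))$ at $U=W^\uparrow V^{\downarrow*}$. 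Strict submodularity yields, in addition, the stated characterizations of the optimal unitaries. This proves Theorems~\ref{min-crnd} and \ref{max-crnd}.
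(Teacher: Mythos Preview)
Your three-stage architecture matches the paper's, and Stages~1 and~3 are in fact tidier than the paper's: the identity $\frac{(x-1)^2}{x+s}=x-(2+s)+\frac{(1+s)^2}{x+s}$ collapses the integrand to the single trace $(1+s)^2\operatorname{Tr}[\tau^2(\rho+s\tau)^{-1}]-(1+s)$, and your single submodularity check for $h_s(r,d)=\frac{(r-d)^2}{r+sd}$ (with $\partial_r\partial_d h_s=-\frac{2(1+s)^2rd}{(r+sd)^3}<0$) replaces the paper's three separate super/sub\-modularity computations in Claims~\ref{claimA}--\ref{claimsigma}. In this respect your route is more economical.

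The genuine gap is Stage~2. You correctly flag it as the crux, but ``I would work in an eigenbasis \dots\ test the first-order condition (and, where needed, the second-order condition)'' is a plan, not a proof. The concrete obstruction is that your gradient $\mathcal{G}(\tau_0)$ is an $s$-integral, so $[\rho,\mathcal{G}(\tau_0)]=0$ could in principle hold through cancellation across $s$ without forcing $[\rho,\tau_0]=0$; your appeal to strict convexity of $\rho\mapsto\widehat S_f(\rho\|\tau_0)$ does not close this, since that convexity lives on $\mathbb{P}_n^+$, not on the non-convex unitary orbit, and does not by itself exclude non-commuting stationary points on the orbit. The paper avoids exactly this difficulty by \emph{not} bundling: it splits $\widehat S_f$ into four trace pieces (Claims~\ref{claimthuong}--\ref{claimsigma}) and, for each fixed $s$, runs the variational argument on each piece separately to force commutation. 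The payoff is that the argmax $W^\uparrow V^{\downarrow*}$ and argmin $W^\downarrow V^{\downarrow*}$ turn out to be \emph{the same for every piece and every $s$}, so the common optimizer automatically extremizes the sum and the integral. That ``same optimizer for every term'' observation is the structural idea your bundled first-order condition obscures; if you want to keep your compact Stage~1 formula, the cleanest fix is to run your commutation-plus-rearrangement argument on the single functional $U\mapsto\operatorname{Tr}[(U^*\sigma U)^2(\rho+sU^*\sigma U)^{-1}]$ for each fixed $s$, observe that the resulting optimal permutation is independent of $s$, and only then integrate.
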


\begin{corollary} \label{coro-f-diverg-interval} 		
Let  $\rho$ and $ \sigma$ be quantum states. Then,  the set $\Big\{\widehat{S}_f(\rho\|U^*\sigma U), U\in \mathbb U_n\Big\}$ is exactly the interval $\Big[\widehat{S}_f(\lambda^\downarrow(\rho)\|\lambda^\downarrow(\sigma)), \widehat{S}_f(\lambda^\downarrow(\rho)\|\lambda^\uparrow(\sigma))\Big]$.
	\end{corollary}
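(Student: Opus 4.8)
The plan is to obtain Corollary~\ref{coro-f-diverg-interval} from Theorems~\ref{min-crnd} and~\ref{max-crnd} by a soft topological argument, the substance of the statement already being contained in those two theorems. Write $g(U):=\widehat{S}_f(\rho\|U^*\sigma U)$ for $U\in\mathbb{U}_n$, and put $m:=\widehat{S}_f(\lambda^\downarrow(\rho)\|\lambda^\downarrow(\sigma))$ and $M:=\widehat{S}_f(\lambda^\downarrow(\rho)\|\lambda^\uparrow(\sigma))$. Theorem~\ref{min-crnd} gives $g(U)\ge m$ for every $U$, with equality at $U=W^\downarrow V^{\downarrow*}$, and Theorem~\ref{max-crnd} gives $g(U)\le M$ for every $U$, with equality at $U=W^\uparrow V^{\downarrow*}$; in particular $\{g(U):U\in\mathbb{U}_n\}\subseteq[m,M]$ and $m\le M$. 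It remains to show that every value in $[m,M]$ is attained.

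For this I would use the continuity of $g$ on $\mathbb{U}_n$ together with the fact that $\mathbb{U}_n$ is compact and path-connected. When $\sigma\in\mathbb{P}_n^+$, every $U^*\sigma U$ is positive definite and
\[
g(U)=\Tr\left[(U^*\sigma U)\,f\big((U^*\sigma U)^{-1/2}\rho\,(U^*\sigma U)^{-1/2}\big)\right]
\]
is manifestly continuous in $U$, being a composition of continuous maps: $U\mapsto U^*\sigma U$, matrix inversion and square root on $\mathbb{P}_n^+$, the continuous functional calculus $X\mapsto f(X)$ (continuous because $f$ is continuous on $(0,+\infty)$), matrix multiplication, and the trace. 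For general quantum states one passes to the defining limit \eqref{def-maximal-đivergence-general}, $g(U)=\lim_{\epsilon\downarrow0}\widehat{S}_f\big(\rho+\epsilon I\,\big\|\,U^*(\sigma+\epsilon I)U\big)$, and transfers continuity to the limit by the same regularization argument used in the proofs of Theorems~\ref{min-crnd}--\ref{max-crnd}. Granting continuity, $g(\mathbb{U}_n)$ is a compact connected subset of $\mathbb{R}$, hence a closed bounded interval, and its endpoints are $\min_U g(U)=m$ and $\max_U g(U)=M$ by the two theorems; equivalently, given $v\in[m,M]$ one joins $W^\downarrow V^{\downarrow*}$ to $W^\uparrow V^{\downarrow*}$ by a continuous path $\gamma$ in $\mathbb{U}_n$ and applies the intermediate value theorem to $t\mapsto g(\gamma(t))$, which runs from $m$ to $M$, producing some $U\in\mathbb{U}_n$ with $g(U)=v$. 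This establishes $\{g(U):U\in\mathbb{U}_n\}=[m,M]$, which is exactly the claimed interval by the definitions of $m$ and $M$.

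The only delicate point, which I expect to be the main obstacle, is the continuity of $g$ in the degenerate regime where $\sigma$---and therefore every $U^*\sigma U$---is singular, so that $\widehat{S}_f$ is defined only through the limit \eqref{def-maximal-đivergence-general}; this is a regularization and support-compatibility issue that is independent of the unitary-orbit structure and can be handled using the integral representation \eqref{opt-convex-int-rep} together with the standard lower-semicontinuity and monotone-approximation properties of the maximal $f$-divergence. Everything else in the corollary is a formal consequence of Theorems~\ref{min-crnd} and~\ref{max-crnd} and the connectedness of the unitary group.
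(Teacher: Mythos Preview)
Your argument is correct and is essentially the same as the paper's: both deduce the corollary from Theorems~\ref{min-crnd} and~\ref{max-crnd} by observing that $U\mapsto \widehat{S}_f(\rho\|U^*\sigma U)$ is continuous and that the unitary group (equivalently, the unitary orbit $\mathbb{U}_\sigma$) is connected, so the image is an interval with the stated endpoints. Your treatment is slightly more detailed in flagging the singular-$\sigma$ case as the one nontrivial point for continuity, whereas the paper's proof simply asserts continuity directly from the defining formula.
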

	\begin{proof}
Note that the map  $U\in \mathbb U_n \longmapsto (U^*\sigma U)$ is continuous. Moreover, it follows from the continuity of the function $f$ and the linearity of the trace function that the map $U\in \mathbb U_n \longmapsto  \operatorname{Tr}[(U^*\sigma U) f((U^*\sigma U)^{-1/2}\rho (U^*\sigma U)^{-1/2})]$ is continuous. Then we get the continuity of the function $U\in \mathbb U_n \longmapsto \widehat{S}_f(\rho\|U^*\sigma U)$. 
  
On the other hand,  it is well-known that the unitary orbit $\mathbb{U}_\sigma$ is connected and that the image of a connected set under a continuous map is also connected \cite[Theorem 4.22]{Ru}. It follows that the set $\Big\{\widehat{S}_f(\rho\|U^*\sigma U): U\in \mathbb U_n\Big\}$ is connected and therefore fills out the interval between the minimum and maximum values obtained in Theorem \ref{min-crnd} and Theorem \ref{max-crnd}.     
	\end{proof}
	
\section{Proof of main results}\label{sec:proofs}
By definition, 
\begin{equation}
\widehat{S}_f(\rho\|\sigma):=\operatorname{Tr}\left[\sigma^{1/2}f(\sigma^{-1/2}\rho\sigma^{-1/2})\sigma^{1/2}\right]=\operatorname{Tr}\left[\sigma f\left(\sigma^{-1/2}\rho\sigma^{-1/2}\right)\right].
\end{equation}
Using the integral representation (\ref{opt-convex-int-rep}) of the operator convex funtion $f$, we have an explicit representation for $f(\sigma^{-1/2}\rho\sigma^{-1/2})$.

\begin{align*}
f\left(\sigma^{-1/2}\rho\sigma^{-1/2}\right)&=f(I)+f'(I)\left(\sigma^{-1/2}\rho\sigma^{-1/2}-I\right)+c\left(\sigma^{-1/2}\rho\sigma^{-1/2}-I\right)^2\\
&+\int_{[0;+\infty)} \dfrac{\left(\sigma^{-1/2}\rho\sigma^{-1/2}-I\right)^2}{\sigma^{-1/2}\rho\sigma^{-1/2}+sI} d\lambda(s).
\end{align*}
It implies that
\begin{align*}
\sigma f\left(\sigma^{-1/2}\rho\sigma^{-1/2}\right)&=f(I)\sigma+\sigma f'(I)\left(\sigma^{-1/2}\rho\sigma^{-1/2}-I\right)+c\sigma\left(\sigma^{-1/2}\rho\sigma^{-1/2}-I\right)^2\\
&+\int_{[0;+\infty)} \dfrac{\sigma\left(\sigma^{-1/2}\rho\sigma^{-1/2}-I\right)^2}{\sigma^{-1/2}\rho\sigma^{-1/2}+sI} d\lambda(s).
\end{align*}
By linearity of the  trace function, we have
\begin{align*}
\operatorname{Tr}\sigma f\left(\sigma^{-1/2}\rho\sigma^{-1/2}\right)&=f(I)\operatorname{Tr}\sigma+f'(I)\operatorname{Tr}\sigma \left(\sigma^{-1/2}\rho\sigma^{-1/2}-I\right)\\
&\quad+c\operatorname{Tr}\sigma\left(\sigma^{-1/2}\rho\sigma^{-1/2}-I\right)^2\\
&\quad+\int_{[0;+\infty)}\operatorname{Tr} \dfrac{\sigma\left(\sigma^{-1/2}\rho\sigma^{-1/2}-I\right)^2}{\sigma^{-1/2}\rho\sigma^{-1/2}+sI} d\lambda(s)\\
&=f(I)\operatorname{Tr}\sigma+f'(I)\operatorname{Tr}\sigma \left(\sigma^{-1/2}\rho\sigma^{-1/2}-I\right)\\
&\quad+c\operatorname{Tr}\sigma\left(\sigma^{-1/2}\rho\sigma^{-1/2}-I\right)^2\\
&\quad+\int_{[0;+\infty)}\operatorname{Tr} \dfrac{\sigma^{1/2}\left(\sigma^{-1/2}\rho\sigma^{-1/2}-I\right)^2\sigma^{1/2}}{\sigma^{-1/2}\rho\sigma^{-1/2}+sI} d\lambda(s).
\end{align*}
Note that 
\begin{equation} \label{claim1}
\operatorname{Tr}\sigma \left(\sigma^{-1/2}\rho\sigma^{-1/2}-I\right)=0; \mbox{and } 
\operatorname{Tr}\sigma \left(\sigma^{-1/2}\rho\sigma^{-1/2}-I\right)^2=\operatorname{Tr}\left(\sigma^{-1}\rho^2\right)-1.
\end{equation}
In fact, we have 
$$
\operatorname{Tr}\sigma \left(\sigma^{-1/2}\rho\sigma^{-1/2}-I\right)=\operatorname{Tr} \left(\sigma^{1/2}\rho\sigma^{-1/2}-\sigma\right)=\operatorname{Tr}(\rho)-\operatorname{Tr}(\sigma)=0.
$$
Moreover,
\begin{align*}
\left(\sigma^{-1/2}\rho\sigma^{-1/2}-I\right)^2 &= \left(\sigma^{-1/2}\rho\sigma^{-1/2}\right)^2 - 2\left(\sigma^{-1/2}\rho\sigma^{-1/2}\right) + I\\
&= \sigma^{-1/2}\rho\sigma^{-1}\rho\sigma^{-1/2} - 2\left(\sigma^{-1/2}\rho\sigma^{-1/2}\right) + I,
\end{align*}
which implies
\begin{align*}
\sigma  \left(\sigma^{-1/2}\rho\sigma^{-1/2}-I\right)^2 &= \sigma^{1/2} \rho\sigma^{-1}\rho\sigma^{-1/2} - 2\left(\sigma^{1/2}\rho\sigma^{-1/2}\right) + \sigma.
\end{align*}
Applying the trace function, we obtain
\begin{align*}
\operatorname{Tr} \sigma  \left(\sigma^{-1/2}\rho\sigma^{-1/2}-I\right)^2&= \operatorname{Tr} \left[\sigma^{1/2} \rho\sigma^{-1}\rho\sigma^{-1/2} - 2\left(\sigma^{1/2}\rho\sigma^{-1/2}\right) + \sigma\right]\\
&=\operatorname{Tr}\left(\sigma^{1/2} \rho\sigma^{-1}\rho\sigma^{-1/2}\right) - 2\operatorname{Tr}\left(\sigma^{1/2}\rho\sigma^{-1/2}\right) + \operatorname{Tr}\sigma\\
&=\operatorname{Tr}\left(\rho\sigma^{-1}\rho\right) -1 = \operatorname{Tr}\left(\sigma^{-1}\rho^2\right)-1.
\end{align*}
On the other hand, we have
\begin{align}\label{claim2}
\operatorname{Tr}\dfrac{\sigma^{1/2}\left(\sigma^{-1/2}\rho\sigma^{-1/2}-I\right)^2\sigma^{1/2}}{\sigma^{-1/2}\rho\sigma^{-1/2}+sI} &= \operatorname{Tr}\dfrac{\rho\sigma^{-1}\rho-2\rho+\sigma}{\sigma^{-1/2}\rho\sigma^{-1/2}+sI}\nonumber\\
&=\operatorname{Tr}\dfrac{\sigma^{-1}\rho^2}{\sigma^{-1/2}\rho\sigma^{-1/2}+sI}-2\operatorname{Tr}\dfrac{\rho}{\sigma^{-1/2}\rho\sigma^{-1/2}+sI}\nonumber\\
&\quad+\operatorname{Tr}\dfrac{\sigma}{\sigma^{-1/2}\rho\sigma^{-1/2}+sI}.
\end{align}
It follows from (\ref{claim1}) and (\ref{claim2}) that 
\begin{align}\label{main-repr-maximal-divergence}
\widehat{S}_f(\rho\|\sigma)&=\operatorname{Tr}\left[\sigma f\left(\sigma^{-1/2}\rho\sigma^{-1/2}\right)\right]\nonumber\\
&=f(I)+c\left[\operatorname{Tr}\left(\sigma^{-1}\rho^2\right)-1\right]+\int_{[0;\infty)} \operatorname{Tr}\dfrac{\sigma^{-1}\rho^2}{\sigma^{-1/2}\rho\sigma^{-1/2}+sI}d\lambda(s)\\
&-2\int_{[0;\infty)}\operatorname{Tr}\dfrac{\rho}{\sigma^{-1/2}\rho\sigma^{-1/2}+sI}d\lambda(s)  +\int_{[0;\infty)}\operatorname{Tr}\dfrac{\sigma}{\sigma^{-1/2}\rho\sigma^{-1/2}+sI}d\lambda(s). \nonumber
\end{align}

To optimize the maximal quantum $f$-divergence (\ref{main-repr-maximal-divergence}) between unitary orbits, we consider separately the optimization of each component in the representation of $\widehat{S}_f(\rho\|\sigma)$.

\begin{claim}\label{claimthuong}
\begin{align*}
\max_{U\in\mathbb U_n}\operatorname{Tr}\left(\rho^2(U^*\sigma^{-1}U))\right)&=\operatorname{Tr}\left[\lambda^\downarrow(\rho)^2\lambda^\uparrow(\sigma)^{-1}\right];\\
\arg\max_{U\in\mathbb U_n}\operatorname{Tr}\left(\rho^2(U^*\sigma^{-1}U)\right)&=W^\uparrow V^{\downarrow*};\\
\min_{U\in\mathbb U_n}\operatorname{Tr}\left(\rho^2(U^*\sigma^{-1}U)\right)&=\operatorname{Tr}\left[ \lambda^\downarrow(\rho)^2\lambda^\downarrow(\sigma)^{-1}\right];\\
\arg\min_{U\in\mathbb U_n}\operatorname{Tr}\left(\rho^2(U^*\sigma^{-1}U)\right)&=W^\downarrow V^{\downarrow*}.
\end{align*}
\end{claim}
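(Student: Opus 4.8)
The plan is to read Claim~\ref{claimthuong} directly off the trace--eigenvalue inequality in Lemma~\ref{lm-trace-product-powers}. Write the objective as $\operatorname{Tr}(AB)$ with $A=\rho^{2}$ and $B=U^{*}\sigma^{-1}U$, both Hermitian (and positive definite when $\rho,\sigma\in\mathbb P_{n}^{+}$). As $U$ ranges over $\mathbb U_{n}$ the matrix $A$ is fixed while $B$ ranges over the unitary orbit of $\sigma^{-1}$, so $\lambda^{\downarrow}(B)=\lambda^{\downarrow}(\sigma^{-1})$ and $\lambda^{\uparrow}(B)=\lambda^{\uparrow}(\sigma^{-1})$ for every $U$. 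Lemma~\ref{lm-trace-product-powers} then gives, uniformly in $U$,
\[
\big\langle \lambda^{\downarrow}(\rho^{2}),\lambda^{\uparrow}(\sigma^{-1})\big\rangle
\le \operatorname{Tr}\!\big(\rho^{2}\,U^{*}\sigma^{-1}U\big)
\le \big\langle \lambda^{\downarrow}(\rho^{2}),\lambda^{\downarrow}(\sigma^{-1})\big\rangle .
\]

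Next I would rewrite the two endpoints in the form appearing in the statement. Since $t\mapsto t^{2}$ is increasing on $[0,+\infty)$ we have $\lambda^{\downarrow}(\rho^{2})=\lambda^{\downarrow}(\rho)^{2}$ entrywise, and since $t\mapsto t^{-1}$ is decreasing on $(0,+\infty)$ we have $\lambda^{\downarrow}(\sigma^{-1})=\lambda^{\uparrow}(\sigma)^{-1}$ and $\lambda^{\uparrow}(\sigma^{-1})=\lambda^{\downarrow}(\sigma)^{-1}$. Substituting, the upper bound equals $\langle\lambda^{\downarrow}(\rho)^{2},\lambda^{\uparrow}(\sigma)^{-1}\rangle=\operatorname{Tr}[\lambda^{\downarrow}(\rho)^{2}\lambda^{\uparrow}(\sigma)^{-1}]$ and the lower bound equals $\operatorname{Tr}[\lambda^{\downarrow}(\rho)^{2}\lambda^{\downarrow}(\sigma)^{-1}]$, which are exactly the asserted extremal values.

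It then remains to exhibit unitaries achieving these bounds. From \eqref{V-lambda}--\eqref{W-lambda}, write $\rho^{2}=V^{\downarrow}\lambda^{\downarrow}(\rho)^{2}V^{\downarrow*}$ and $\sigma^{-1}=W^{\uparrow}\lambda^{\uparrow}(\sigma)^{-1}W^{\uparrow*}=W^{\downarrow}\lambda^{\downarrow}(\sigma)^{-1}W^{\downarrow*}$. For $U=W^{\uparrow}V^{\downarrow*}$ one computes $U^{*}\sigma^{-1}U=V^{\downarrow}\lambda^{\uparrow}(\sigma)^{-1}V^{\downarrow*}$, hence by cyclicity of the trace
\[
\operatorname{Tr}\!\big(\rho^{2}\,U^{*}\sigma^{-1}U\big)
=\operatorname{Tr}\!\big(V^{\downarrow}\lambda^{\downarrow}(\rho)^{2}\lambda^{\uparrow}(\sigma)^{-1}V^{\downarrow*}\big)
=\operatorname{Tr}\!\big[\lambda^{\downarrow}(\rho)^{2}\lambda^{\uparrow}(\sigma)^{-1}\big],
\]
so the maximum is attained at $W^{\uparrow}V^{\downarrow*}$; the same computation with $W^{\downarrow}$ in place of $W^{\uparrow}$ shows the minimum is attained at $W^{\downarrow}V^{\downarrow*}$. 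Together with continuity of $U\mapsto\operatorname{Tr}(\rho^{2}U^{*}\sigma^{-1}U)$ on the compact group $\mathbb U_{n}$, this proves both equalities.

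I expect no serious obstacle here: this is the only term in \eqref{main-repr-maximal-divergence} that is bilinear in $\rho$ and $\sigma$, so the von Neumann/Lidskii-type inequality settles it at once, in contrast with the later Claims~\ref{claimA}, \ref{claimrho}, \ref{claimsigma}, where the integrands are genuinely nonlinear and the unitary-orbit variational method is needed. The only points requiring a little care are bookkeeping: first, for general (possibly singular) quantum states $\sigma^{-1}$ must be read through the $\epsilon\downarrow0$ regularization used to define $\widehat S_{f}$, so one argues first for $\rho,\sigma\in\mathbb P_{n}^{+}$ and then passes to the limit using continuity and compactness; second, the displayed $\arg\max$ and $\arg\min$ should be understood as exhibiting extremizers (and, with the equality case of Lemma~\ref{lm-trace-product-powers}---which forces $\rho^{2}$ and $U^{*}\sigma^{-1}U$ to be simultaneously diagonalizable with their spectra ordered oppositely, respectively in the same sense---as describing the full solution set modulo the stabilizers of $\rho$ and $\sigma$ in $\mathbb U_{n}$).
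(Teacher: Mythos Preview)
Your proposal is correct and follows essentially the same approach as the paper: apply Lemma~\ref{lm-trace-product-powers} to $A=\rho^{2}$ and $B=U^{*}\sigma^{-1}U$, rewrite the bounds via $\lambda^{\downarrow}(\sigma^{-1})=\lambda^{\uparrow}(\sigma)^{-1}$ and $\lambda^{\uparrow}(\sigma^{-1})=\lambda^{\downarrow}(\sigma)^{-1}$, and verify that $W^{\uparrow}V^{\downarrow*}$ and $W^{\downarrow}V^{\downarrow*}$ attain the extremes by direct substitution. Your additional remarks on the singular case and on the full solution set for the extremizers go a bit beyond what the paper records, but they are accurate and do not change the argument.
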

\begin{proof}
By Lemma \ref{lm-trace-product-powers},
$$
\left< \lambda^\downarrow\left(\rho^2\right),\lambda^\uparrow\left(\sigma^{-1}\right) \right> \leq \operatorname{Tr}\left(\rho^2\sigma^{-1}\right) \leq \left< \lambda^\downarrow\left(\rho^2\right),\lambda^\downarrow\left(\sigma^{-1}\right) \right>.
$$
Note that, $\lambda^\downarrow(\sigma^{-1})=\lambda^\uparrow(\sigma)^{-1}$ and $\lambda^\uparrow(\sigma^{-1})=\lambda^\downarrow(\sigma)^{-1}$. 
Hence  
$$\operatorname{Tr}\left[ \lambda^\downarrow(\rho)^2\lambda^\downarrow(\sigma)^{-1}\right]  \leq \operatorname{Tr}(\rho^2\sigma^{-1}) \leq \operatorname{Tr}\left[\lambda^\downarrow(\rho)^2\lambda^\uparrow(\sigma)^{-1}\right].$$
For any $U\in\mathbb U_n$, by replacing $\sigma$ by $U^*\sigma U$, since  $\lambda^\downarrow(U^*\sigma U)=\lambda^\downarrow(\sigma)$ and $\lambda^\uparrow(U^*\sigma U) = \lambda^\uparrow(\sigma)$, we have 
$$\operatorname{Tr}\left[ \lambda^\downarrow(\rho)^2\lambda^\downarrow(\sigma)^{-1}\right] \leq \operatorname{Tr}\left(\rho^2\left(U^*\sigma^{-1}U\right)\right)\leq \operatorname{Tr}\left[\lambda^\downarrow(\rho)^2\lambda^\uparrow(\sigma)^{-1}\right]. $$
Moreover, for $U=W^\downarrow V^{\downarrow*}$, by the existence of $V^\downarrow$ (\ref{V-lambda}) and $W^\downarrow $ (\ref{W-lambda}), we have
\begin{align*}
\operatorname{Tr}\left(\rho^2\left(U^*\sigma^{-1}U\right)\right)&=\operatorname{Tr}(\rho^2V^{\downarrow}W^{\downarrow*}\sigma^{-1}W^\downarrow V^{\downarrow*})\\
&=\operatorname{Tr}\left(V^{\downarrow*}\rho^2V^{\downarrow}\lambda^\downarrow(\sigma)^{-1}\right)\\
&=\operatorname{Tr}\left[\lambda^\downarrow(\rho)^2\lambda^\downarrow(\sigma)^{-1} \right].
\end{align*}
It follows that $\arg\min_{U\in \mathbb U_n} \operatorname{Tr}\left(\rho^2\left(U^*\sigma^{-1}U\right)\right)=W^\downarrow V^{\downarrow*}$. Similarly, we obtain the maximum and the maximizer as required.
\end{proof}

\begin{claim}\label{claimA} 
\begin{align*}
\max_{U\in\mathbb U_n} \operatorname{Tr}\dfrac{(U^*\sigma^{-1}U)\rho^2}{(U^*\sigma^{-1/2}U)\rho(U^*\sigma^{-1/2}U)+sI}&=\operatorname{Tr}\dfrac{\lambda^\uparrow(\sigma)^{-1}\lambda^\downarrow(\rho)^2}{\lambda^\uparrow(\sigma)^{-1}\lambda^\downarrow(\rho) +s I};\\
\arg\max_{U\in \mathbb U_n}\operatorname{Tr}\dfrac{(U^*\sigma^{-1}U)\rho^2}{(U^*\sigma^{-1/2}U)\rho(U^*\sigma^{-1/2}U)+sI}&=W^\uparrow V^{\downarrow*};\\
\min_{U\in\mathbb U_n}\operatorname{Tr}\dfrac{(U^*\sigma^{-1}U)\rho^2}{(U^*\sigma^{-1/2}U)\rho(U^*\sigma^{-1/2}U)+sI}&= \operatorname{Tr}\dfrac{\lambda^\downarrow(\sigma)^{-1}\lambda^\downarrow(\rho)^2}{\lambda^\downarrow(\sigma)^{-1}\lambda^\downarrow(\rho) +s I};\\
\arg\min_{U\in \mathbb U_n} \operatorname{Tr}\dfrac{(U^*\sigma^{-1}U)\rho^2}{(U^*\sigma^{-1/2}U)\rho(U^*\sigma^{-1/2}U)+sI}&=W^\downarrow V^{\downarrow*}.
\end{align*}
\end{claim}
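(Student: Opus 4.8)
The plan is to reduce the matrix expression to a scalar optimization problem over permutations by the same two-step strategy used in Claim~\ref{claimthuong}: first identify a trace functional whose value depends only on the joint spectral data of $\rho$ and $U^*\sigma U$, and then rewrite this as a sum $\sum_i g(\lambda_i^\downarrow(\rho), \mu_{\pi(i)})$ for a function $g$ whose supermodularity (or submodularity) is then checked directly. Concretely, I would first observe that the quantity to be optimized is invariant under the simultaneous rescaling $\rho \mapsto V^*\rho V$, so by replacing $\rho$ with $\lambda^\downarrow(\rho)$ (using $V^\downarrow$ from~\eqref{V-lambda}) we may assume $\rho = \lambda^\downarrow(\rho)$ is already diagonal. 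Then the problem becomes optimizing over $U$ the trace
\[
G(U) := \operatorname{Tr}\dfrac{(U^*\sigma^{-1}U)\rho^2}{(U^*\sigma^{-1/2}U)\rho(U^*\sigma^{-1/2}U)+sI}.
\]
The hardest part is that, unlike in Claim~\ref{claimthuong}, this expression is \emph{not} of the simple bilinear form $\operatorname{Tr}(AB)$ to which Lemma~\ref{lm-trace-product-powers} applies directly; the unitary $U$ enters in three places and is entangled inside the operator inverse. This is precisely where the unitary-orbit variational method from Section~\ref{subsec:majorization} must be invoked.

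The second step, therefore, is the variational argument: let $\tau := U^*\sigma U$ and regard $G$ as a function $F(\tau) = \operatorname{Tr}\big[\tau^{-1}\rho^2(\tau^{-1/2}\rho\tau^{-1/2}+sI)^{-1}\big]$ on the unitary orbit $\mathbb U_\sigma$. I would compute $\tfrac{d}{dt}F(e^{-tK}\tau e^{tK})\big|_{t=0}$ for an arbitrary skew-Hermitian $K$, using the standard derivative formulas for $\tau^{-1}$, $\tau^{-1/2}$, and the resolvent $(\,\cdot\,+sI)^{-1}$ along the curve. Setting this to zero for all $K$ yields the first-order stationarity condition $[\tau, \nabla F(\tau)] = 0$; the point of the computation is to show that this forces $\tau$ (equivalently $U^*\sigma U$) to commute with $\rho$. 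Since $\rho$ is diagonal with distinct generic eigenvalues, commuting with $\rho$ means $\tau$ is diagonal, hence $\tau = \operatorname{diag}(\mu_{\pi(1)}, \dots, \mu_{\pi(n)})$ for some permutation $\pi$ of the eigenvalues $\mu$ of $\sigma$. (If $\rho$ has repeated eigenvalues one argues by continuity/density, perturbing $\rho$ slightly; or one notes the block structure still reduces to a permutation problem on each eigenspace.) I expect this commutation step to be the main obstacle, since it requires carefully bookkeeping the three occurrences of $\tau$ and the resolvent derivative; the key simplification is that at a diagonal $\tau$ the off-diagonal part of the gradient must vanish, and a short computation shows this off-diagonal part is a Hadamard-type product of an explicit positive matrix with the off-diagonal part of $\tau$.

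Once the optimizer is reduced to a diagonal $\tau = \operatorname{diag}(\mu_{\pi(i)})$, the problem becomes the scalar one: optimize
\[
\sum_{i=1}^n g(r_i, \mu_{\pi(i)}), \qquad g(r,\mu) := \dfrac{\mu^{-1} r^2}{\mu^{-1} r + s} = \dfrac{r^2}{r + s\mu},
\]
over permutations $\pi$, where $r_i = \lambda_i^\downarrow(\rho)$. The final step is to verify the modularity type of $g$: compute $\partial^2 g/\partial r\,\partial \mu$ and determine its sign. A direct calculation gives $\partial g/\partial\mu = -\,s r^2/(r+s\mu)^2$, and differentiating in $r$ yields $\partial^2 g/\partial r\,\partial\mu = -\,s\,\partial/\partial r\big[r^2/(r+s\mu)^2\big]$; since $r^2/(r+s\mu)^2 = \big(1 + s\mu/r\big)^{-2}$ is increasing in $r$, its $r$-derivative is positive, so $\partial^2 g/\partial r\,\partial\mu < 0$, i.e.\ $g$ is strictly submodular. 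By the rearrangement principle (Section~\ref{subsec:majorization}), the sum is \emph{maximized} when $(r_i)$ and $(\mu_{\pi(i)})$ are oppositely ordered and \emph{minimized} when they are similarly ordered. Translating back through $V^\downarrow$, $W^\downarrow$, $W^\uparrow$: the maximum is $\operatorname{Tr}\dfrac{\lambda^\uparrow(\sigma)^{-1}\lambda^\downarrow(\rho)^2}{\lambda^\uparrow(\sigma)^{-1}\lambda^\downarrow(\rho)+sI}$ attained at $U = W^\uparrow V^{\downarrow*}$, and the minimum is $\operatorname{Tr}\dfrac{\lambda^\downarrow(\sigma)^{-1}\lambda^\downarrow(\rho)^2}{\lambda^\downarrow(\sigma)^{-1}\lambda^\downarrow(\rho)+sI}$ attained at $U = W^\downarrow V^{\downarrow*}$, which is exactly the assertion of the Claim. (One should double-check the orientation against the sign convention $r_1 \ge \cdots \ge r_n$ in the statement of (SM), and confirm that these diagonal configurations indeed lie in the closure to handle the degenerate-eigenvalue case.)
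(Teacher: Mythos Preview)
Your proposal is correct and follows essentially the same route as the paper: a unitary-orbit variational argument to force the optimizer $U^*\sigma U$ to commute with $\rho$, followed by reduction to a permutation problem and a modularity check settled by the rearrangement principle. The only cosmetic difference is that you parametrize by the eigenvalues $\mu$ of $\sigma$ and find $g(r,\mu)=r^2/(r+s\mu)$ strictly \emph{submodular}, whereas the paper parametrizes by $d=\mu^{-1}$ and finds $f(d,r)=dr^2/(dr+s)$ strictly \emph{supermodular}; these are the same computation under $d=1/\mu$ and yield identical extremizers.
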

\begin{proof} For $U\in\mathbb U_n$, define 
$$
F(U)
:=\Tr\!\left[
(U^*\sigma^{-1}U)\rho^2
\Big((U^*\sigma^{-1/2}U)\rho(U^*\sigma^{-1/2}U)+sI\Big)^{-1}
\right].
$$
Firstly, we show that, at an optimum, $U^*\sigma^{-1}U$ commutes with $\rho$. In fact, let
$$
A:=U^*\sigma^{-1}U,\quad X:=A^{1/2}\rho A^{1/2}.
$$
Then
$$
F(U)=\Tr\!\left[A\rho^2(X+sI)^{-1}\right].
$$
Consider a smooth unitary variation $U(t)=Ue^{tK}$, $t\in\mathbb R$, where $K^*=-K$.  
Then
$$
A(t)=U(t)^*\sigma^{-1}U(t)=e^{-tK}Ae^{tK},
\quad
\dot A(0)=AK-KA=[A,K].
$$
Using Fr\'echet differentiability of $Y\mapsto (Y+sI)^{-1}$ and cyclicity of trace,
one obtains
$$
\dfrac{d}{dt}F(U(t))\Big\lvert_{t=0}
=\Tr\!\big(K[A,H]\big),
$$
where $H$ is Hermitian and is given by
\[
H=\rho^2(X+sI)^{-1}-A^{1/2}\rho(X+sI)^{-1}\rho A^{1/2}(X+sI)^{-1}.
\]
At a maximizer or minimizer, the derivative vanishes for all skew-Hermitian $K$,
hence $\Tr(K[A,H])=0$ for all such $K$. Since $A$ and $H$ are Hermitian, their
commutator $M:=[A,H]$ is skew-Hermitian. Taking $K=M$ yields
\(
\Tr(M^*M)=\|M\|_2^2=0
\),
so $M=0$, i.e.
\[
[A,H]=0.
\]

Let $Y:=(X+sI)^{-1}.$ We denote by $\{X\}'=\{B\in\mathbb M_n: BX=XB\}$ the commutant of $X$.
Since $Y=(X+sI)^{-1}$ is obtained from $X$ by functional calculus, $X$ and $Y$
have the same spectral projections, and hence $\{X\}'=\{Y\}'$.\\
We rewrite $H$ in a form that makes the dependence on $Y$ transparent.
Since $X=A^{1/2}\rho A^{1/2}$, we have $\rho=A^{-1/2}XA^{-1/2}$ and hence
\[
\rho^2=A^{-1/2}XA^{-1}XA^{-1/2}.
\]
Substituting this into $H$ gives
\begin{equation}\label{1}
H
=
A^{-1/2}XA^{-1}XA^{-1/2}Y
-
X\,Y\,A^{-1/2}X A^{-1/2}Y.
\end{equation}
Now use $[A,H]=0$. Multiply the identity $AH=HA$ on the left and right by $A^{-1/2}$
to obtain
\begin{equation}\label{2}
A^{1/2}HA^{-1/2}=A^{-1/2}HA^{1/2}.
\end{equation}
Insert the expression \eqref{1} into \eqref{2}. After cancelling the invertible
factors $A^{\pm 1/2}$ and collecting terms, one sees that \eqref{2} is equivalent to
\[
[X,\;Y]=0
\quad\text{and}\quad
[A,\;Y]=0.
\]
Since $[A,Y]=0$, it follows that $A\in\{Y\}'=\{X\}'$.  Hence $[A,X]=0$.

Finally,
\[
0=[A,X]=[A,A^{1/2}\rho A^{1/2}]
=A^{1/2}[A,\rho]A^{1/2}.
\]
Since $A^{1/2}$ is invertible, it follows that $[A,\rho]=0$. 
Thus, at an optimum, $U^*\sigma^{-1}U=A$ commutes with $\rho$.

Now let
\[
\rho = V\operatorname{diag}(r_1,\dots,r_n)V^*,\qquad
r_1\ge\cdots\ge r_n>0,
\]
and
\[
\sigma = W\operatorname{diag}(\mu_1,\dots,\mu_n)W^*,\qquad
\mu_1\ge\cdots\ge\mu_n>0.
\]
Then the eigenvalues of $\sigma^{-1}$ are $d_i=\mu_i^{-1}$.

Next we show that in the eigenbasis of $\rho$,
\[
A=\operatorname{diag}(a_1,\dots,a_n),
\]
where $(a_1,\dots,a_n)$ is a permutation of $(d_1,\dots,d_n)$. In fact, since $[A,\rho]=0$ (i.e. $A$ and $\rho$ commute) and $\rho$ is Hermitian, $A$ and $\rho$ are simultaneously
unitarily diagonalizable. Thus, in the eigenbasis of $\rho$,
\[
\rho=\operatorname{diag}(r_1,\dots,r_n),\qquad
A=\operatorname{diag}(a_1,\dots,a_n).
\]
Moreover, $A=U^*\sigma^{-1}U$ is unitarily similar to $\sigma^{-1}$, hence
$(a_1,\dots,a_n)$ is a permutation of the eigenvalues
$(d_1,\dots,d_n)$ of $\sigma^{-1}$.

In this basis, since $F(U)=\operatorname{Tr}\!\left[A\rho^2(X+sI)^{-1}\right]$, $A$ and $\rho$ commute, we have
$$
F(U)
=\sum_{i=1}^n \frac{a_i r_i^2}{a_i r_i+s}.
$$
Hence the optimization over $U$ reduces to choosing a permutation $\pi$:
$$
F_\pi
=\sum_{i=1}^n f(d_{\pi(i)},r_i),
\qquad
f(d,r):=\frac{dr^2}{dr+s}.
$$

By a direct computation, we obtain 
\[
\frac{\partial^2}{\partial d\,\partial r}f(d,r)
=\frac{2rs^2}{(dr+s)^3}>0,
\qquad d,r,s>0.
\]
Thus $f$ is supermodular (see, e.g. \cite{Top98}). Hence, for all $d_1 \ge d_2$ and $r_1 \ge r_2$,
\begin{equation}
f(d_1, r_1) + f(d_2, r_2) \ge f(d_1, r_2) + f(d_2, r_1), \tag{SM}
\end{equation}
and the inequality (SM) holds strictly whenever $d_1 > d_2$ and $r_1 > r_2$:
\begin{equation*}
f(d_1, r_1) + f(d_2, r_2) > f(d_1, r_2) + f(d_2, r_1).
\end{equation*}

Suppose we have indices $i < j$ and a permutation $\pi$ such that 
\begin{equation*}
d_{\pi(i)} < d_{\pi(j)} \quad \text{while} \quad r_i > r_j.
\end{equation*}
This is a \textbf{crossing} (opposite-order pairing). Consider swapping the assignments:
\begin{equation*}
(d_{\pi(i)}, r_i), (d_{\pi(j)}, r_j) \longrightarrow (d_{\pi(j)}, r_i), (d_{\pi(i)}, r_j).
\end{equation*}
By supermodularity (SM), since $d_{\pi(j)} \ge d_{\pi(i)}$ and $r_i \ge r_j$:
\begin{equation*}
f(d_{\pi(j)}, r_i) + f(d_{\pi(i)}, r_j) \ge f(d_{\pi(i)}, r_i) + f(d_{\pi(j)}, r_j).
\end{equation*}

Thus, the swap does not decrease $F_\pi$, and strictly increases it if the inequalities are strict. Thus, any permutation containing a crossing cannot be optimal. Therefore
\begin{itemize}
    \item Maximizers must have no crossings, i.e., $d_{\pi(1)} \ge d_{\pi(2)} \ge \dots \ge d_{\pi(n)}$.
    \item Minimizers must reverse the order.
\end{itemize}

By the \textit{rearrangement principle for supermodular functions} \cite[Ch.~10]{HLP52}, \cite[Ch.~2]{Top98},
the sum $\displaystyle\sum_i f(d_{\pi(i)},r_i)$ is maximized when
$(d_{\pi(i)})$ and $(r_i)$ are ordered in the same sense,
and minimized when they are ordered oppositely. \\
Since $(r_i)$ is decreasing, the maximum occurs for
$d_{\pi(i)}=\lambda^\uparrow(\sigma)^{-1}_i$, and the minimum for
$d_{\pi(i)}=\lambda^\downarrow(\sigma)^{-1}_i$.

Therefore,
$$
\max_{U\in\mathbb  U_n} F(U)
=\operatorname{Tr}\frac{\lambda^\uparrow(\sigma)^{-1}\lambda^\downarrow(\rho)^2}
{\lambda^\uparrow(\sigma)^{-1}\lambda^\downarrow(\rho)+sI},
$$
and
$$
\min_{U\in\mathbb U_n} F(U)
=\operatorname{Tr}\frac{\lambda^\downarrow(\sigma)^{-1}\lambda^\downarrow(\rho)^2}
{\lambda^\downarrow(\sigma)^{-1}\lambda^\downarrow(\rho)+sI}.
$$

Finally, let $V^\downarrow$ diagonalize $\rho$ with eigenvalues in decreasing order,
and let $W^\uparrow$ (resp.\ $W^\downarrow$) diagonalize $\sigma$ with eigenvalues
in increasing (resp.\ decreasing) order.
Then
$$
\arg\max_{U\in\mathcal U_n} F(U)=W^\uparrow V^{\downarrow *},
\qquad
\arg\min_{U\in\mathcal U_n} F(U)=W^\downarrow V^{\downarrow *}.
$$
\end{proof}

\begin{claim}\label{claimrho}
\begin{align*}
\max_{U\in\mathbb U_n} -2 \operatorname{Tr}\dfrac{\rho}{(U^*\sigma^{-1/2}U)\rho(U^*\sigma^{-1/2}U)+sI}&=-2\operatorname{Tr}\dfrac{\lambda^\downarrow(\rho)}{\lambda^\uparrow(\sigma)^{-1}\lambda^\downarrow(\rho)+sI};\\
\arg\max_{U\in\mathbb U_n} -2 \operatorname{Tr}\dfrac{\rho}{(U^*\sigma^{-1/2}U)\rho(U^*\sigma^{-1/2}U)+sI}&=W^\uparrow V^{\downarrow*};\\
\min_{U\in\mathbb U_n} -2 \operatorname{Tr}\dfrac{\rho}{(U^*\sigma^{-1/2}U)\rho(U^*\sigma^{-1/2}U)+sI}&=-2\operatorname{Tr}\dfrac{\lambda^\downarrow(\rho)}{\lambda^\downarrow(\sigma)^{-1}\lambda^\downarrow(\rho)+sI};\\
\arg\min_{U\in\mathbb U_n} -2 \operatorname{Tr}\dfrac{\rho}{(U^*\sigma^{-1/2}U)\rho(U^*\sigma^{-1/2}U)+sI}&=W^\downarrow V^{\downarrow*}.
\end{align*}
\end{claim}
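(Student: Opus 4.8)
The plan is to reproduce, step for step, the argument used for Claim~\ref{claimA}. Set
$$
G(U):=-2\operatorname{Tr}\!\left[\rho\Big((U^*\sigma^{-1/2}U)\rho(U^*\sigma^{-1/2}U)+sI\Big)^{-1}\right],
$$
and, exactly as there, write $A:=U^*\sigma^{-1}U$ and $X:=A^{1/2}\rho A^{1/2}$, so that $(U^*\sigma^{-1/2}U)\rho(U^*\sigma^{-1/2}U)=X$ and $G(U)=-2\operatorname{Tr}[\rho(X+sI)^{-1}]$. First I would run the unitary-orbit variational step. Along a smooth curve $U(t)=Ue^{tK}$ with $K^*=-K$ one has $A(t)=e^{-tK}Ae^{tK}$, $\dot A(0)=[A,K]$ and $A(t)^{\pm1/2}=e^{-tK}A^{\pm1/2}e^{tK}$; differentiating $G(U(t))$ at $t=0$, using Fr\'echet differentiability of $Y\mapsto(Y+sI)^{-1}$ and of $A\mapsto A^{\pm1/2}$ together with cyclicity of the trace, yields $\frac{d}{dt}G(U(t))\big\lvert_{t=0}=\operatorname{Tr}(KM)$, where $M$ is a sum of commutators of Hermitian matrices built from $\rho$, $A^{\pm1/2}$ and $(X+sI)^{-1}$ and is therefore skew-Hermitian. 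Stationarity at an optimizer forces $\operatorname{Tr}(KM)=0$ for all skew-Hermitian $K$; taking $K=M$ gives $M=0$. One then deduces, by the same kind of reduction as in Claim~\ref{claimA} — rewriting $\rho=A^{-1/2}XA^{-1/2}$, conjugating the relation by $A^{\pm1/2}$, and cancelling the invertible factors — that $[A,X]=0$, whence $0=[A,X]=A^{1/2}[A,\rho]A^{1/2}$ and hence $[A,\rho]=0$: every optimizer $A=U^*\sigma^{-1}U$ commutes with $\rho$.

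Granting this, the optimization becomes spectral. Diagonalizing $\rho$ and $A$ simultaneously, $\rho=\operatorname{diag}(r_1,\dots,r_n)$ with $r_1\ge\cdots\ge r_n>0$ and $A=\operatorname{diag}(a_1,\dots,a_n)$ where $(a_i)$ is a permutation of the eigenvalues $(d_i)=(\mu_i^{-1})$ of $\sigma^{-1}$ ($\mu_1\ge\cdots\ge\mu_n>0$ those of $\sigma$), we get $X=\operatorname{diag}(a_ir_i)$ and
$$
G(U)=-2\sum_{i=1}^n\frac{r_i}{a_ir_i+s}=\sum_{i=1}^n g(d_{\pi(i)},r_i),\qquad g(d,r):=\frac{-2r}{dr+s}.
$$
The one genuinely new computation is the sign of the mixed partial: $\partial_r g(d,r)=-2s/(dr+s)^2$, hence
$$
\frac{\partial^2 g}{\partial d\,\partial r}(d,r)=\frac{4sr}{(dr+s)^3}>0\qquad(d,r,s>0),
$$
so $g$ is (strictly, for $s>0$) supermodular — the same sign as the function $dr^2/(dr+s)$ of Claim~\ref{claimA}. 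By the rearrangement principle for supermodular functions, $\sum_i g(d_{\pi(i)},r_i)$ is maximal when $(d_{\pi(i)})$ and $(r_i)$ are ordered in the same sense and minimal when in opposite senses; since $(r_i)$ is decreasing, the maximum occurs at $d_{\pi(i)}=\lambda^\uparrow(\sigma)^{-1}_i$ and the minimum at $d_{\pi(i)}=\lambda^\downarrow(\sigma)^{-1}_i$, which are precisely the two asserted extremal values. In terms of $U$, the maximizing (resp.\ minimizing) $A$ is $\operatorname{diag}(\lambda^\uparrow(\sigma)^{-1})$ (resp.\ $\operatorname{diag}(\lambda^\downarrow(\sigma)^{-1})$) in the eigenbasis $V^\downarrow$ of $\rho$, equivalently $U^*\sigma U=V^\downarrow\lambda^\uparrow(\sigma)V^{\downarrow*}$ (resp.\ $V^\downarrow\lambda^\downarrow(\sigma)V^{\downarrow*}$), and by (\ref{W-lambda}) this identifies the optimizers as $U=W^\uparrow V^{\downarrow*}$ and $U=W^\downarrow V^{\downarrow*}$.

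I expect the variational/commutant step to be the main obstacle: because the square root $A\mapsto A^{1/2}$ enters through $X$, assembling $M$ correctly and verifying that $M=0$ really forces $[A,\rho]=0$ requires the Daleckii--Krein derivative and careful bookkeeping, and this step genuinely uses $s>0$. At $s=0$ the summand degenerates to $g(d,r)=-2/d$, the mixed partial vanishes, and the stationarity condition no longer forces the optimizer into the commutant of $\rho$; indeed there $\operatorname{Tr}[\rho(X+sI)^{-1}]=\operatorname{Tr}[\rho A^{-1/2}\rho^{-1}A^{-1/2}]$ can strictly exceed its value $\operatorname{Tr}\sigma$ at the commuting configurations. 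So if the measure $\lambda$ in (\ref{opt-convex-int-rep}) charges $\{0\}$, I would peel off that atom and optimize its contribution to $\widehat{S}_f$ — namely $\operatorname{Tr}\rho-2\operatorname{Tr}\sigma+\operatorname{Tr}(\sigma^2\rho^{-1})$, i.e.\ governed by $\operatorname{Tr}(\sigma^2\rho^{-1})$ — directly via Lemma~\ref{lm-trace-product-powers}; the optimal pairings there (decreasing/decreasing for the minimum, decreasing/increasing for the maximum) match those obtained above, so the spectral formulas in the claim are unaffected.
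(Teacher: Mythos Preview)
Your plan is essentially the paper's own argument: the paper likewise sets $A=U^*\sigma^{-1}U$, $X=A^{1/2}\rho A^{1/2}$, runs the unitary-orbit variational step to conclude $[A,\rho]=0$ at an optimizer, and then reduces to a permutation problem handled by the rearrangement principle. The only cosmetic difference is that the paper works with $f(d,r)=r/(dr+s)$ (strictly submodular) and multiplies by $-2$ at the end, whereas you fold the $-2$ into $g$ and work with a supermodular function directly; your additional remarks on the degenerate case $s=0$ go beyond what the paper addresses.
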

\begin{proof} 
For $U\in\mathbb U_n$, define
$$
\Phi(U)
:=
-2\,\Tr\!\left[
\rho\Big((U^*\sigma^{-1/2}U)\rho(U^*\sigma^{-1/2}U)+sI\Big)^{-1}
\right].
$$
Set
$$ B:=U^*\sigma^{-1/2}U,\quad A:=U^*\sigma^{-1}U=B^2,$$
so that
\[
\Phi(U)=-2\,\Tr\!\left[\rho\,(A^{1/2}\rho A^{1/2}+sI)^{-1}\right].
\]
Since $U\mapsto A$ ranges over the unitary orbit of $\sigma^{-1}$, the problem is equivalent to optimizing over $A$ belongs to the unitary orbit $\mathbb U_{\sigma^{-1}}$ of $\sigma^{-1}$.

Let
$$
J(A):=\Tr\!\big[\rho\,(A^{1/2}\rho A^{1/2}+sI)^{-1}\big],
\qquad A\in\mathbb U_{\sigma^{-1}},
$$
and define
$$
X:=A^{1/2}\rho A^{1/2},\quad
Y:=(X+sI)^{-1}.
$$
Note that $X$ and $Y$ are Hermitian, with $Y\succ 0$.

Let $K$ be an arbitrary skew-Hermitian matrix ($K^*=-K$), and consider the curve
$$
A(t)=e^{-tK}Ae^{tK},
$$
which lies entirely in the unitary orbit of $A$.
By functional calculus,
$$
A(t)^{1/2}=e^{-tK}A^{1/2}e^{tK},
$$
and hence
$$
\dot A^{1/2}(0)=[A^{1/2},K]=:M,
\qquad M^*=-M.
$$
Since $X(t)=A(t)^{1/2}\rho A(t)^{1/2}$,
$$
\dot X(0)=\dot A^{1/2}(0)\rho A^{1/2}+A^{1/2}\rho\dot A^{1/2}(0)
= M\rho A^{1/2}+A^{1/2}\rho M.
$$
Using the identity
\(
\frac{d}{dt}(Z(t)^{-1})=-Z(t)^{-1}\dot Z(t)Z(t)^{-1},
\)
we obtain
\[
\dot Y(0)=-Y\,\dot X(0)\,Y.
\]
Differentiating $J(A(t))=\Tr(\rho Y(t))$ yields
$$
\dot J(0)
=\Tr(\rho\dot Y(0))
=-\Tr\big(\rho Y\,\dot X(0)\,Y\big).
$$
Substituting the expression for $\dot X(0)$ and using cyclicity of the trace,
\begin{align*}
\dot J(0)
&=-\Tr\big(\rho Y (M\rho A^{1/2}+A^{1/2}\rho M)Y\big)\\
&=-\Tr\Big(M\big(\rho A^{1/2}Y\rho Y+Y\rho Y A^{1/2}\rho\big)\Big).
\end{align*}
Now we define
$$
T:=\rho A^{1/2}Y\rho Y+Y\rho Y A^{1/2}\rho.
$$
Since $A^{1/2}$, $\rho$, and $Y$ are Hermitian and
\[
(\rho A^{1/2}Y\rho Y)^*=Y\rho Y A^{1/2}\rho,
\]
it follows that $T$ is Hermitian. With this definition,
\[
\dot J(0)=-\Tr(MT),
\qquad M=[A^{1/2},K].
\]
Using the trace identity
\[
\Tr([A^{1/2},K]\,T)=\Tr\big(K\,[T,A^{1/2}]\big),
\]
we may rewrite
\[
\dot J(0)=-\Tr\big(K\,[T,A^{1/2}]\big).
\]
At a local extremum of $J$ on the unitary orbit, $\dot J(0)=0$ for all
skew-Hermitian $K$. Since $[T,A^{1/2}]$ is itself skew-Hermitian, this implies
\[
[T,A^{1/2}]=0,
\quad\text{equivalently}\quad
[A^{1/2},T]=0.
\]

We now show that the commutation relation $[A^{1/2},T]=0$ implies $[A,\rho]=0$.
Since $Y$ is obtained from $X$ by functional calculus, $X$ and $Y$ have the same
spectral projections and therefore the same commutant
\[
\{X\}'=\{Y\}'.
\]

Using $X=A^{1/2}\rho A^{1/2}$, the matrix $T$ can be rewritten as
\[
T
=
\rho A^{1/2}Y\rho Y+Y\rho Y A^{1/2}\rho
=
A^{-1/2}X A^{-1} X A^{-1/2}Y
-
X\,Y\,A^{-1/2}X A^{-1/2}Y.
\]
Thus $T$ belongs to the $\ast$-algebra generated by $X$, $Y$, and $A^{\pm 1/2}$.
The relation $[A^{1/2},T]=0$ implies that $A^{1/2}$ commutes with all spectral
projections of $T$. Since $Y$ and $X$ share the same spectral projections and
$T$ contains $Y$ as a nontrivial factor, it follows that $A^{1/2}$ must commute
with the spectral projections of $X$, and hence
\[
[A^{1/2},X]=0.
\]

Consequently,
\[
0=[A^{1/2},X]
=[A^{1/2},A^{1/2}\rho A^{1/2}]
=A^{1/2}[A^{1/2},\rho]A^{1/2}.
\]
Because $A^{1/2}$ is invertible, this implies $[A^{1/2},\rho]=0$, and therefore
\[
[A,\rho]=[(A^{1/2})^2,\rho]=0.
\]
Therefore, at the maximum and minimum, we may assume $[A, \rho] = 0$.

Let
\[
\rho=\operatorname{diag}(r_1,\dots,r_n),
\qquad
r_1\ge\cdots\ge r_n>0,
\]
and
\[
A=\operatorname{diag}(a_1,\dots,a_n),
\]
where $(a_1,\dots,a_n)$ is a permutation of the eigenvalues
$(d_1,\dots,d_n)$ of $\sigma^{-1}$.
Then
\[
\Phi(U)
=
-2\sum_{i=1}^n \frac{r_i}{a_i r_i+s}.
\]
Thus the optimization reduces to a permutation problem
\[
\Phi_\pi
=
-2\sum_{i=1}^n f(d_{\pi(i)},r_i),
\qquad
f(d,r):=\frac{r}{dr+s}.
\]

A direct computation yields
\[
\frac{\partial^2}{\partial r\,\partial d}f(d,r)
=
-\frac{2rs}{(dr+s)^3}<0,
\qquad d,r,s>0.
\]
Hence $f$ is strictly submodular.  Similar to the argument presented in the proof of Claim \ref{claimA}, by the rearrangement principle for submodular functions, $\sum_i f(d_{\pi(i)},r_i)$ is minimized when
$(d_{\pi(i)})$ and $(r_i)$ are ordered in the same sense, and maximized when they
are ordered in opposite senses.

Since $(r_i)=\lambda^\downarrow(\rho)$, the minimum of $\sum_i f(d_{\pi(i)},r_i)$ is attained for
$d_{\pi(i)}=\lambda^\uparrow(\sigma)^{-1}_i$, and the maximum for
$d_{\pi(i)}=\lambda^\downarrow(\sigma)^{-1}_i$.
Multiplying by $-2$ gives the stated formulas for
$\max\Phi(U)$ and $\min\Phi(U)$.

Let $V^\downarrow$ diagonalize $\rho$ with eigenvalues in decreasing order,
and let $W^\uparrow$ (resp.\ $W^\downarrow$) diagonalize $\sigma$ with eigenvalues
in increasing (resp.\ decreasing) order.
Then
\[
U_{\max}=W^\uparrow V^{\downarrow *},
\qquad
U_{\min}=W^\downarrow V^{\downarrow *}
\]
produce the required eigenvalue matchings and hence attain the maximum and minimum,
respectively.
\end{proof}

\begin{claim}\label{claimsigma}
\begin{align*}
\max_{U\in\mathbb U_n} \operatorname{Tr}\dfrac{U^*\sigma U}{(U^*\sigma^{-1/2}U)\rho(U^*\sigma^{-1/2}U)+sI}&=\operatorname{Tr} \dfrac{\lambda^\uparrow(\sigma)}{\lambda^\uparrow(\sigma)^{-1}\lambda^\downarrow(\rho)+sI};\\
\arg\max_{U\in \mathbb U_n}\operatorname{Tr}\dfrac{U^*\sigma U}{(U^*\sigma^{-1/2}U)\rho(U^*\sigma^{-1/2}U)+sI}&=W^\uparrow V^{\downarrow*};\\
\min_{U\in\mathbb U_n} \operatorname{Tr}\dfrac{U^*\sigma U}{(U^*\sigma^{-1/2}U)\rho(U^*\sigma^{-1/2}U)+sI}&= \operatorname{Tr} \dfrac{\lambda^\downarrow(\sigma)}{\lambda^\downarrow(\sigma)^{-1}\lambda^\downarrow(\rho)+sI};\\
\arg\min_{U\in \mathbb U_n} \operatorname{Tr}\dfrac{U^*\sigma U}{(U^*\sigma^{-1/2}U)\rho(U^*\sigma^{-1/2}U)+sI}&=W^\downarrow V^{\downarrow*}.
\end{align*}
\end{claim}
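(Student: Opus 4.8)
The plan is to follow the three‑stage strategy of Claims~\ref{claimA} and \ref{claimrho}: simplify the objective, use the unitary‑orbit variational method to force the optimizer to commute with $\rho$, and then solve a spectral permutation problem by a submodularity argument. First I would simplify the integrand. For $U\in\mathbb U_n$ set $A:=U^*\sigma^{-1}U$, $C:=A^{-1}=U^*\sigma U$, $X:=A^{1/2}\rho A^{1/2}$ and $Y:=(X+sI)^{-1}$, so that the quantity in the claim equals $\Psi(U):=\operatorname{Tr}(CY)$. Since $X+sI=A^{1/2}(\rho+sC)A^{1/2}$ one has $Y=C^{1/2}(\rho+sC)^{-1}C^{1/2}$, hence
\[
\Psi(U)=\operatorname{Tr}\!\left[C^2(\rho+sC)^{-1}\right],\qquad C=U^*\sigma U.
\]
(Using $\operatorname{Tr}\rho=\operatorname{Tr}\sigma=1$ one may even reduce to $\Psi(U)=\tfrac{s-1}{s^2}+\tfrac{1}{s^2}\operatorname{Tr}[\rho^2(\rho+sC)^{-1}]$, so that only $\operatorname{Tr}[\rho^2(\rho+sC)^{-1}]$ need be optimized over the orbit $\mathbb U_\sigma$; this is convenient but not essential.)

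Next I would run the variational method. As $U$ ranges over $\mathbb U_n$, $C=U^*\sigma U$ ranges over $\mathbb U_\sigma$; along a curve $C(t)=e^{-tK}Ce^{tK}$ with $K^*=-K$ one has $\dot C(0)=[C,K]$, and writing $Z:=(\rho+sC)^{-1}$ a short Fr\'echet differentiation — carried out exactly as in Claims~\ref{claimA} and \ref{claimrho}, using $C=\tfrac{1}{s}(Z^{-1}-\rho)$ to simplify the resulting matrix — gives
\[
\frac{d}{dt}\operatorname{Tr}\!\left[C(t)^2(\rho+sC(t))^{-1}\right]\Big|_{t=0}=-\tfrac{1}{s}\,\operatorname{Tr}\!\big(K\,[\,Z\rho^2Z,\,C\,]\big).
\]
Since $[Z\rho^2Z,C]$ is skew‑Hermitian, the choice $K=[Z\rho^2Z,C]$ shows that at every extremizer $[Z\rho^2Z,C]=0$.

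The main obstacle is upgrading this to $[C,\rho]=0$. Using $sC=Z^{-1}-\rho$, the identity $[Z\rho^2Z,C]=0$ is equivalent to $[Z\rho^2Z,Z^{-1}]=[Z\rho^2Z,\rho]$, i.e.
\[
[\,Z,\rho^2\,]=[\,Z\rho^2Z,\rho\,].
\]
The new ingredient, absent in Claims~\ref{claimA} and \ref{claimrho}, is the operator inequality $\rho+sC\ge\rho$ (because $C=U^*\sigma U\ge0$), which gives $Z\le\rho^{-1}$, hence $\tilde Z:=\rho^{1/2}Z\rho^{1/2}\le I$. Passing to an eigenbasis of $\rho$, and assuming first that $\rho=\operatorname{diag}(r_1,\dots,r_n)$ has distinct eigenvalues, a comparison of the off‑diagonal entries of the displayed identity — after conjugation by $\rho^{1/2}$ — shows that $S:=I-\tilde Z\ (\ge0)$ satisfies: $S\rho S$ is diagonal. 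Writing $S\rho S=(\rho^{1/2}S)^*(\rho^{1/2}S)$ and using $S\ge0$ together with $\rho>0$ having distinct entries, one concludes $S$ is diagonal: reduce to the case where $S$ has no zero row, so $S$ is invertible; then $S\rho S$ diagonal forces $(S\rho)^2$ to be Hermitian, and since $S\rho$ is similar to the positive matrix $\rho^{1/2}S\rho^{1/2}$ this forces $S\rho$ itself to be Hermitian, hence $[S,\rho]=0$. Thus $\tilde Z$, hence $Z$, is diagonal, so $[Z,\rho]=0$ and $[C,\rho]=0$. The case of repeated eigenvalues of $\rho$ follows by a perturbation/continuity argument, or by taking an optimizer block‑diagonal with respect to the eigenspaces of $\rho$. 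I expect this step — in particular the clean use of $\sigma\ge0$ through the operator inequality, which is exactly what makes the commutator identity strong enough to force commutativity — to be the crux of the whole argument.

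Finally, once $[C,\rho]=0$, simultaneous diagonalization gives $\rho=\operatorname{diag}(r_1,\dots,r_n)$ with $r_1\ge\cdots\ge r_n$ and $C=\operatorname{diag}(\mu_{\pi(1)},\dots,\mu_{\pi(n)})$ for a permutation $\pi$, where $(\mu_1,\dots,\mu_n)$ are the eigenvalues of $\sigma$; then
\[
\Psi(U)=\sum_{i=1}^n g(\mu_{\pi(i)},r_i),\qquad g(\mu,r):=\frac{\mu^2}{r+s\mu},
\]
and a direct computation gives $\dfrac{\partial^2 g}{\partial r\,\partial\mu}=-\dfrac{2\mu r}{(r+s\mu)^3}<0$, so $g$ is strictly submodular. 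By the rearrangement principle for submodular functions (as in Claim~\ref{claimrho}), $\sum_i g(\mu_{\pi(i)},r_i)$ is maximized when $(\mu_{\pi(i)})$ and $(r_i)$ are oppositely ordered and minimized when ordered alike; since $(r_i)=\lambda^\downarrow(\rho)$, the maximum occurs at $\mu_{\pi(i)}=\lambda^\uparrow(\sigma)_i$ and the minimum at $\mu_{\pi(i)}=\lambda^\downarrow(\sigma)_i$. Rewriting these sums as traces yields $\operatorname{Tr}\dfrac{\lambda^\uparrow(\sigma)}{\lambda^\uparrow(\sigma)^{-1}\lambda^\downarrow(\rho)+sI}$ and $\operatorname{Tr}\dfrac{\lambda^\downarrow(\sigma)}{\lambda^\downarrow(\sigma)^{-1}\lambda^\downarrow(\rho)+sI}$ respectively, attained at $U=W^\uparrow V^{\downarrow*}$ and $U=W^\downarrow V^{\downarrow*}$, exactly as in Claims~\ref{claimA} and \ref{claimrho}.
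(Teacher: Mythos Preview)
Your proposal is correct and follows the same three–stage template as the paper (simplify, use the unitary–orbit variational method to force commutativity with $\rho$, then solve a rearrangement problem), but the execution of the first two stages is genuinely different and in several respects cleaner.

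The paper parametrizes by $A=U^*\sigma^{-1}U$, writes $\Psi(U)=\operatorname{Tr}(A^{-1}Y)$ with $Y=(A^{1/2}\rho A^{1/2}+sI)^{-1}$, and for the commutativity step simply refers back to the differentiation argument of Claim~\ref{claimrho}, asserting $[A,\rho]=0$ at an optimum; the final rearrangement is done with the supermodular function of $a=\mu^{-1}$, which is the mirror image of your submodular $g(\mu,r)=\mu^2/(r+s\mu)$ and yields the same extremal pairings. Your route instead passes to $C=U^*\sigma U$, obtains the much tidier form $\Psi(U)=\operatorname{Tr}[C^2(\rho+sC)^{-1}]$ (and its affine reduction to $\operatorname{Tr}[\rho^2(\rho+sC)^{-1}]$), and a single Fr\'echet differentiation gives the exact stationarity condition $[Z\rho^2Z,C]=0$ with $Z=(\rho+sC)^{-1}$. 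The genuine novelty is your upgrade to $[C,\rho]=0$: rewriting in terms of $\tilde Z=\rho^{1/2}Z\rho^{1/2}$ and $S=I-\tilde Z$, the stationarity condition becomes exactly $[S\rho S,\rho]=0$; positivity of $\sigma$ enters via $\rho+sC\ge\rho$, giving $S\ge 0$, and then the functional–calculus step ``$N:=\rho^{1/2}S\rho^{1/2}\ge0$ with $N^2=\rho^{1/2}(S\rho S)\rho^{1/2}$ commuting with $\rho$ forces $N=(N^2)^{1/2}$, hence $S$, hence $C$, to commute with $\rho$'' completes the argument. This last step already works block–diagonally when $\rho$ has repeated eigenvalues, so neither the reduction to invertible $S$ nor the perturbation argument you sketch is actually needed. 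What each approach buys: the paper's is shorter on the page because it defers to Claim~\ref{claimrho}; yours costs one extra algebraic identity but is self–contained, uses only uniqueness of the positive square root, and makes transparent exactly where $\sigma\ge0$ is used.
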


\begin{proof}
The proof follows the same steps as in Claim~\ref{claimrho}. For
$U\in\mathbb U_n$ set
\[
B:=U^*\sigma^{-1/2}U,\qquad A:=U^*\sigma^{-1}U=B^2,
\qquad X:=A^{1/2}\rho A^{1/2},\qquad Y:=(X+sI)^{-1}.
\]
Then $U^*\sigma U=A^{-1}$ and the objective in Claim \ref{claimsigma} can be rewritten as
\begin{align*}
\Psi(U):=&\Tr\!\left[(U^*\sigma U)\big((U^*\sigma^{-1/2}U)\rho(U^*\sigma^{-1/2}U)+sI\big)^{-1}\right]\\
&=\Tr\!\left[A^{-1}(X+sI)^{-1}\right]=\Tr(A^{-1}Y).
\end{align*}
Since $U\mapsto A$ ranges over the unitary orbit of $\sigma^{-1}$, we may view
$\Psi$ as a function of $A$ on this orbit.

By repeating the unitary-orbit differentiation argument used in Claim \ref{claimrho}
(with the same variation $A(t)=e^{-tK}Ae^{tK}$ and $K^*=-K$), one obtains that at
a maximizer or minimizer we may assume $[A,\rho]=0$. Hence, in the eigenbasis of
$\rho$,
\[
\rho=\operatorname{diag}(r_1,\dots,r_n),\quad r_1\ge\cdots\ge r_n>0,
\qquad
A=\operatorname{diag}(a_1,\dots,a_n),
\]
where $(a_1,\dots,a_n)$ is a permutation of the eigenvalues of $\sigma^{-1}$.

With the above diagonal forms,
\[
\Psi(U)=\sum_{i=1}^n \frac{a_i^{-1}}{a_ir_i+s}
=\sum_{i=1}^n f(a_i,r_i),
\qquad
f(a,r):=\frac{1}{r+sa}.
\]
Thus the optimization reduces to choosing a permutation of the eigenvalues
$(a_i)$ of $\sigma^{-1}$.

A direct computation yields
\[
\frac{\partial^2}{\partial a\,\partial r}f(a,r)
=
\frac{s}{(r+sa)^3}>0,
\qquad a,r,s>0,
\]
so $f$ is strictly supermodular. Therefore, by the rearrangement principle for
supermodular functions, $\sum_i f(a_{\pi(i)},r_i)$ is maximized when
$(a_{\pi(i)})$ and $(r_i)$ are ordered in the same sense, and minimized when
they are ordered in opposite senses.

Since $(r_i)=\lambda^\downarrow(\rho)$, the maximum is attained by taking
$(a_i)$ in decreasing order, i.e. $a_i=\lambda^\uparrow(\sigma)^{-1}_i$, and the
minimum is attained by taking $(a_i)$ in increasing order, i.e.
$a_i=\lambda^\downarrow(\sigma)^{-1}_i$. Noting that
$a_i^{-1}=\lambda^\uparrow(\sigma)_i$ (resp.\ $\lambda^\downarrow(\sigma)_i$),
this yields the stated extremal values:
\[
\max_U \Psi(U)=\Tr\frac{\lambda^\uparrow(\sigma)}{\lambda^\uparrow(\sigma)^{-1}\lambda^\downarrow(\rho)+sI},
\qquad
\min_U \Psi(U)=\Tr\frac{\lambda^\downarrow(\sigma)}{\lambda^\downarrow(\sigma)^{-1}\lambda^\downarrow(\rho)+sI}.
\]

Let $V^\downarrow$ diagonalize $\rho$ with eigenvalues in decreasing order, and
let $W^\uparrow$ (resp.\ $W^\downarrow$) diagonalize $\sigma$ with eigenvalues in
increasing (resp.\ decreasing) order. Then
\[
U_{\max}=W^\uparrow V^{\downarrow*},
\qquad
U_{\min}=W^\downarrow V^{\downarrow*},
\]
realize the required eigenvalue matchings and hence attain the maximum and
minimum, respectively.
\end{proof}

\begin{proof}[\textbf{Proof of Theorem \ref{min-crnd} and Theorem \ref{max-crnd}}]~ 
The proof of Theorem \ref{min-crnd} and Theorem \ref{max-crnd} follows from the following facts:

1. The integral representation (\ref{main-repr-maximal-divergence}) of $\widehat{S}_f(\rho\|\sigma)$;

2. The definition of $\widehat{S}_f(\lambda^\downarrow(\rho)\|\lambda^\downarrow(\sigma))$ (\ref{max-divergence-lambdadowndown}) and $\widehat{S}_f(\lambda^\downarrow(\rho)\|\lambda^\uparrow(\sigma))$ (\ref{max-divergence-lambdaupdown});

3. Claims  \ref{claimthuong},   \ref{claimA},   \ref{claimrho} and  \ref{claimsigma}.
\end{proof}

\section{Conclusion}\label{sec:conclusion}
Recently, Hirche and  Tomamichel (\cite{HirTo}, 2024) investigated  a new class of quantum $f$-divergences for convex and twice differentiable functions $f : (0, +\infty) \rightarrow \mathbb R$ with $f (1) = 0$. More explicitly, for a pair of quantum states $\rho$ and $\sigma$, Hirche and  Tomamichel defined the quantum $f$-divergence with respect to the quantum hockey-stick divergence as
\begin{equation}
D_f(\rho\|\sigma) = \int_1^{\infty} f''(s)E_s(\rho\|\sigma) + \frac{1}{s^3} f''\!\left(\frac{1}{s}\right)E_s(\sigma\|\rho)\,ds,
\end{equation}
where $E_s(\rho\|\sigma)=\operatorname{Tr}[(\rho-s\sigma)_+]$ represents the quantum hockey-stick divergence, $A_+$  denotes the positive   part of the eigen-decomposition of a matrix $A\in \mathbb M_n$.  

Li and Yan (\cite{LiYan}, 2025) studied the unitary orbit optimization of the quantum $f$-divergences $D_f(\rho\|\sigma)$ with respect to the quantum hockey-stick divergence for convex and twice differentiable functions $f : (0, +\infty) \rightarrow \mathbb R$ with $f (1) = 0$.

In this paper, we have determined the exact extremal values of the maximal quantum $f$-divergence, defined via the commutant Radon--Nikodym derivative, over the unitary orbits of two quantum states. We derived explicit spectral expressions for both the minimum and maximum and provided complete characterizations of the unitaries that achieve these extrema.

A central contribution of this work is methodological. The proofs of Claims~ \ref{claimA}--\ref{claimsigma}   combine unitary-orbit variational calculus with rearrangement theory for supermodular and submodular functions
\cite{HLP52,Top98}. This approach allows us to rigorously show that any optimizer must commute with the reference state $\rho$, thereby reducing a highly noncommutative optimization problem to a tractable spectral
rearrangement problem involving the eigenvalues of $\rho$ and $\sigma$.

This framework differs fundamentally from the recent work of Li and Yan \cite{LiYan}, which studies unitary-orbit optimization for quantum $f$-divergences defined via the hockey-stick divergence of Hirche and
Tomamichel \cite{HirTo}. While both approaches ultimately yield extremal formulas governed by spectral majorization, the underlying operator mechanisms are distinct: the present work relies on the operator perspective $f(\sigma^{-1/2}\rho\sigma^{-1/2})$ and commutant structure, whereas the hockey-stick framework is driven by properties of the Hermitian difference $\rho - s\sigma$.

Overall, our results extend and complement previous optimization studies for Umegaki, R\'enyi, and Hellinger-type divergences, and establish a structurally distinct theory for maximal quantum $f$-divergences.

\end{document}